\numberwithin{equation}{section}
\newtheorem{theorem}[equation]{Theorem}
\newtheorem{lemma}[equation]{Lemma}
\newtheorem{proposition}[equation]{Proposition}
\newtheorem{conjecture}[equation]{Conjecture}
\newtheorem*{nil-theorem}{Nil-Theorem}
\newtheorem{corollary}[equation]{Corollary}
\newtheorem{example}[equation]{Example}
\newtheorem{remark}[equation]{Remark}
\title[Non-symplectic smooth circle actions]
{\bf Non-symplectic smooth circle actions \\ on symplectic manifolds}
\author{Bogus{\l}aw Hajduk, Krzysztof Pawa{\l}owski,\\
        and Aleksy Tralle}
\begin{document}
\maketitle

\begin{abstract}
We present some methods to construct smooth circle actions on symplectic manifolds 
with non-symplectic fixed point sets or non-symplectic cyclic isotropy point sets. 
All such actions are not compatible with any symplectic form. To cover the case 
of non-symplectic fixed point sets, we use non-symplectic $4$-manifolds which
become symplectic after taking the product of the manifold and the $2$-sphere.
In turn, the case of non-symplectic cyclic isotropy point set is obtained by
constructing smooth circle actions on spheres with non-symplectic cyclic 
isotropy point sets, and then by taking the equivariant connected sum of the sphere 
and the cartesian product of copies of the $2$-sphere. By using mapping tori, 
we can convert smooth cyclic group actions into non-symplectic smooth fixed point free 
circle actions on symplectic manifolds.

\bigskip
\small{

\noindent
{\bf Keywords}: circle action, symplectic form

\noindent
{\bf AMS classification (2000)}: Primary 53D05; Secondary 57S25.}

\end{abstract}

\section{Introduction}
One of the intriguing questions on symplectic manifolds is to give,
for a closed manifold $X$, sufficient and necessary conditions on
$M=X\times S^1$ to admit a symplectic structure. If $M$ admits a
symplectic form $\omega$ invariant with respect to the obvious
action of the circle, then on $X$ we have a non-vanishing and closed
1-form $\iota_V\omega ,$ where $V$ is the vector field generating
the action. This implies that $X$ fibres over a circle.  Conversely,
if $X$ admits a symplectic fibration over the circle, then
$X\times S^1$ admits a symplectic structure (compare the proof of
Proposition~\ref{pro:torus}). No other examples are known. In dimension 4, 
an answer to the described problem is known.

\begin{theorem}\label{friedl} \cite{FV} If $X$ is a closed
irreducible 3-manifold with vanishing Thurston norm, then $X\times S^1$
is symplectic if and only if $X$ fibres over a circle.
\end{theorem}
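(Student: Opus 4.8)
Only the forward implication has real content, so I would dispose of the reverse one first. Suppose $X$ fibres over $S^1$ with fibre a closed orientable surface $\Sigma$. After isotoping the monodromy so that it preserves an area form on $\Sigma$ (Moser's trick), $X$ carries a closed $2$-form restricting to an area form on each fibre, and $M=X\times S^1$ becomes the total space of a $\Sigma$-bundle over the torus $T^2=S^1\times S^1$ in which base and fibre both carry area forms and the fibre class pairs non-trivially with the fibrewise form. Thurston's construction --- the same one used in the proof of Proposition~\ref{pro:torus} --- then produces a symplectic form on $M$.

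For the forward implication, assume $M=X\times S^1$ admits a symplectic form $\omega$. The plan is to extract from $\omega$ monic-type constraints on the Alexander polynomial of $X$ and of all its finite covers, and then to use that a closed irreducible $3$-manifold with vanishing Thurston norm is of a very restricted geometric type --- graph manifold, Seifert-fibred space, or Sol-manifold --- so that its fundamental group is subgroup separable (LERF) and these constraints become strong enough to detect a fibration. The first step is the observation that the hypothesis descends to finite covers: if $p\colon\widetilde X\to X$ is a finite cover then $\widetilde X\times S^1\to X\times S^1$ is one as well, hence $\widetilde X\times S^1$ is symplectic (pull back $\omega$), and $\widetilde X$ is again closed, irreducible and of vanishing Thurston norm.

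Next I would feed each $\widetilde X\times S^1$ into Seiberg--Witten theory. By Taubes, a symplectic $4$-manifold with $b_+>1$ has non-zero Seiberg--Witten invariant at the canonical $\mathrm{Spin}^c$ structure, which is extremal with $SW=\pm1$; by the Meng--Taubes theorem, refined by Turaev, the Seiberg--Witten invariants of $Y\times S^1$ are identified with the Milnor--Turaev torsion of $Y$, essentially the multivariable Alexander polynomial; and McMullen's inequality bounds the Alexander norm by the Thurston norm, which here is zero. Together these force $\Delta_{\widetilde X}$ to be monic. Running this over all finite covers, one concludes that every twisted Alexander polynomial of $X$ coming from a finite-image representation of $\pi_1(X)$ is monic, and the proof is finished by the fibring criterion: a $3$-manifold of the geometric type imposed by vanishing Thurston norm whose twisted Alexander polynomials are all monic fibres over $S^1$.

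The step I expect to be the main obstacle is this last one, converting "monic in every finite cover" into an actual fibration. Taubes together with Meng--Taubes can be treated as a black box, but the fibring criterion rests on the interaction of twisted Alexander polynomials with Stallings' fibring condition and, crucially, on subgroup separability of $\pi_1(X)$ --- which is exactly what vanishing Thurston norm buys, since it excludes hyperbolic pieces (whose separability was established only much later). A secondary technical point is the case $b_1(\widetilde X)=1$: then $b_+=1$, Seiberg--Witten theory acquires a chamber structure, and the monic-ness argument requires an additional wall-crossing computation.
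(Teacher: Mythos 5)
This theorem is not proved in the paper at all: it is imported verbatim from Friedl--Vidussi \cite{FV}, so there is no in-paper argument to compare yours against. Judged on its own terms, your outline does reconstruct the actual architecture of the cited proof --- the easy direction via Thurston's fibration construction (the same device the authors invoke in Proposition~\ref{pro:torus}), and the hard direction via Taubes' nonvanishing of Seiberg--Witten invariants for symplectic $4$-manifolds, the Meng--Taubes/Turaev identification of $SW_{X\times S^1}$ with Alexander-polynomial data, McMullen's Alexander-norm inequality, passage to finite covers, and a separability argument. Your handling of the converse is correct and complete enough.

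However, the forward direction as written is a plan, not a proof, and the gap sits exactly where you say it does: the implication ``all (twisted) Alexander polynomials arising from finite covers are monic $\Rightarrow$ $X$ fibres over $S^1$'' is the entire content of \cite{FV}, not a quotable black box --- a clean twisted-Alexander-polynomial fibering criterion did not exist in 2008 and was only established later (and for graph manifolds requires substantially more technology than Stallings' criterion). Two subsidiary assertions also need repair. First, vanishing Thurston norm does not hand you LERF for free: graph manifold groups are not all subgroup separable (Burns--Karrass--Solitar type examples), and Friedl--Vidussi must prove separability of the specific subgroups they need for the restricted class of manifolds at hand. Second, the $b_+=1$ case (when $b_1(\widetilde X)=1$) is not a ``secondary technical point'' one can wave at; the chamber structure genuinely has to be dealt with to extract monicness. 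So the proposal identifies the right skeleton but leaves the load-bearing step unestablished; since the paper itself offers only the citation, the honest conclusion is that a self-contained proof at this level of detail is not achievable, and one should simply cite \cite{FV}.
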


A more general conjecture was stated by Scott Baldridge in \cite{B}.

\begin{conjecture}
Every closed $4$-manifold that admits a symplectic form and a smooth circle action
also admits a symplectic circle action (with respect to a possibly different symplectic form).
\end{conjecture}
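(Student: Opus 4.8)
A natural line of attack on Baldridge's conjecture runs as follows; I sketch the plan and indicate where I expect it to break down.

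The plan is to classify the possible orbit structures of a smooth circle action on a closed $4$-manifold $X$ that also admits a symplectic form, and, for each structure, either to exhibit an $S^1$-invariant symplectic form on $X$ or to replace the action by a manifestly symplectic one. A smooth effective $S^1$-action on a closed oriented $4$-manifold is either fixed point free or has fixed point set a nonempty disjoint union of isolated points and closed orientable surfaces; the two cases call for quite different techniques, so I would handle them in turn. In the fixed point free case the orbit space $X/S^1$ is a closed $3$-orbifold $B$ and $X\to B$ is a Seifert-type $S^1$-fibration, which is a principal bundle over a closed $3$-manifold $N$ when the action is free. Here I would argue, in the spirit of Theorem~\ref{friedl} and using Baldridge's computations of Seiberg--Witten invariants of $4$-manifolds with such actions, together with Bowden's description of symplectic circle bundles, that $X$ carries a symplectic form precisely when $B$ (or the pair consisting of $N$ and the Euler class) admits a fibration over $S^1$ adapted to the bundle, and that in this situation the symplectic form may be averaged to an $S^1$-invariant one. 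Thus in the fixed point free case the conjecture should hold with the \emph{given} action, after possibly changing the form.

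The substantive case is when the fixed point set is nonempty, and here I would subdivide according to $b^+(X)$. If $b^+(X)>1$, then, using the adjunction inequality and the strong restrictions Seiberg--Witten theory places on symplectic $4$-manifolds, together with invariance of the fixed surfaces, I expect the action to be forced into an almost-holomorphic shape, pushing $X$ into the class of blow-ups of ruled and elliptic surfaces, on which Karshon's classification of Hamiltonian $S^1$-spaces supplies a compatible structure; recall that in dimension four a symplectic circle action with a fixed point is automatically Hamiltonian. If $b^+(X)=1$, so that $X$ is rational or ruled, I would instead confront the weighted orbit data of the smooth action, via the Fintushel and Orlik--Raymond classification of smooth $S^1$-actions on $4$-manifolds, with Karshon's list of symplectic circle models on $\mathbb{CP}^2$, $S^2\times S^2$, ruled surfaces and their blow-ups. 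In the simply connected case this already settles the matter: a simply connected closed symplectic $4$-manifold carrying a smooth $S^1$-action is diffeomorphic, by Fintushel's theorem together with the nonvanishing of Seiberg--Witten invariants of symplectic manifolds, to a standard blow-up of $\mathbb{CP}^2$ or of $S^2\times S^2$, each of which is toric and hence admits a symplectic circle action.

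The hard part will be the non-simply-connected portion of the fixed-point case, in particular $b^+(X)=1$ with $b_1(X)>0$: there the smooth orbit data need not obviously be realized by a symplectic model on the \emph{same} smooth manifold, and exotic smooth structures enter the picture, so one must produce either a genuinely different symplectic form adapted to the given smooth action or a genuinely different circle action adapted to the given symplectic form, and nothing forces either. Pinning down invariants of the smooth $S^1$-action that survive into the symplectic category and showing that they always coincide with those of a model from the Karshon type classification of symplectic circle actions on $4$-manifolds is exactly the obstruction, and is why the statement remains a conjecture.
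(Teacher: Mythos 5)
This statement is not proved in the paper, and you should not expect to be able to compare your argument against one: it is an open conjecture, attributed to Baldridge in \cite{B}, which the paper merely records. The only portion that is actually a theorem is the case of a non-empty fixed point set, which is Baldridge's Theorem~\ref{bald}, quoted without proof; the authors' own contribution lies elsewhere (constructions in higher dimensions and of non-symplectic isotropy data), and they even remark that the conjecture ``seems unlikely'' to persist above dimension four. Your sketch is therefore honest in its self-assessment --- you correctly end by saying the statement remains a conjecture --- but as a proof proposal it has an acknowledged, genuine gap and cannot be accepted as a proof of the statement.

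On the substance of the sketch: the part you can actually close is the fixed-point case, and your route (Seiberg--Witten vanishing for $S^1$-manifolds with fixed points, Taubes' non-vanishing for symplectic manifolds forcing $b^+=1$, then Fintushel's classification and comparison with Karshon's symplectic models) is essentially Baldridge's own argument for Theorem~\ref{bald}, so that piece is sound in outline. The fixed point free case is where your plan is thinnest: the orbit space is in general a Seifert-fibered $3$-orbifold rather than a principal bundle, and at the time of this paper there was no analogue of Theorem~\ref{friedl} strong enough to show that a symplectic total space forces a fibration of the quotient adapted to the action, nor that an invariant form can then be produced by averaging (averaging a symplectic form over a non-symplectic action need not yield a nondegenerate form). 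So the two concrete gaps are: (i) the fixed point free case is asserted, not argued, and the averaging step fails as stated; (ii) the $b^+=1$, $b_1>0$ case is explicitly left open. Since the paper itself offers no resolution of either, the correct conclusion is that your text is a research program, not a proof.
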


In the same paper \cite{B}, Baldgidge proved that the answer is positive for
circle actions with non-empty fixed point sets.

\begin{theorem}\label{bald}\cite{B} If $M$ is a closed symplectic $4$-manifold with a circle action
such that the fixed point set is non-empty, then there exists a symplectic circle
action on $M$.
\end{theorem}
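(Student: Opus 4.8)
\medskip
\noindent\textbf{Proof proposal.}
The plan is to combine Fintushel's topological classification of smooth circle actions on $4$-manifolds with gauge-theoretic consequences of the symplectic hypothesis, so as to cut $M$ down to a short explicit list of model surfaces, and then to build a symplectic circle action on each model by hand.

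First I would set up the structure of the action. By Fintushel's classification, the orbit space $N=M/S^1$ is a compact $3$-manifold (with boundary and corners), and $M$ is recovered from $N$ together with the images in $N$ of the fixed surfaces, the isolated fixed points, and the exceptional orbits with their weights; here $M^{S^1}$, being non-empty, is a disjoint union of isolated points and embedded surfaces of even codimension. Cutting $M$ along suitable invariant $3$-manifolds then exhibits it as glued from standard equivariant pieces (disc bundles over the fixed surfaces, cones on lens spaces near isolated fixed points, mapping-torus pieces, and so on).

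Second comes the gauge-theoretic step, which I expect to be the main obstacle. The presence of a fixed point lets one run a Seiberg--Witten vanishing argument for $S^1$-manifolds: roughly, the circle action on $M$ induces an action on the Seiberg--Witten moduli spaces, and the contribution of the fixed-point data forces the basic classes to be so constrained that $SW_M=0$ whenever $b^+(M)>1$. Since Taubes' theorem gives $SW_M\neq 0$ (indeed the canonical class is basic) for any symplectic $4$-manifold with $b^+>1$, we must have $b^+(M)=1$. I would then invoke the classification of symplectic $4$-manifolds with $b^+=1$ (Liu, Ohta--Ono, Li--Liu): after blowing down its exceptional spheres, $M$ is minimal symplectic with $b^+=1$, and --- using the circle action to rule out the non-rational, non-ruled possibilities such as the $E(1)_{p,q}$ --- it must be $\mathbb{CP}^2$, $S^2\times S^2$, or an $S^2$-bundle over a surface $\Sigma_g$. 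One must check that these blow-downs can be performed equivariantly, so that $M$ is an equivariant blow-up of one of these ruled models at fixed points.

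Finally I would construct the symplectic circle action on each model: the standard toric $S^1$-action on $\mathbb{CP}^2$ or $S^2\times S^2$; fibrewise rotation of an $S^2$-bundle over $\Sigma_g$, which preserves a symplectic form assembled from a connection form and a symplectic form on the base; and then equivariant symplectic blow-up at isolated fixed points (whose weights make the blow-up $S^1$-invariant) to recover $M$. The remaining loose ends are to make sure the blow-ups correspond to the actual exceptional curves of $M$ and that the resulting action is genuinely symplectic rather than merely smooth, for which an equivariant Moser argument suffices.
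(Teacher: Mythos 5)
This statement is quoted in the paper as Baldridge's theorem, with a citation to \cite{B}; the paper supplies no proof of its own, so there is nothing internal to compare your argument against. What you have written is, in outline, a reconstruction of the strategy of Baldridge's published proof: the heart of the matter is exactly the Seiberg--Witten vanishing theorem for $S^1$-manifolds with non-empty fixed point set (this is literally the title of \cite{B}), which together with Taubes' non-vanishing result forces $b^+(M)=1$, after which one combines Fintushel's classification of circle actions on $4$-manifolds with the structure theory of symplectic $4$-manifolds with $b^+=1$ to conclude that $M$ is rational or ruled, and such manifolds visibly carry symplectic (indeed Hamiltonian, in the rational case) circle actions. So the route is the right one.

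Two caveats on the level of detail. First, the vanishing theorem is the entire technical content of the result, and your sketch only asserts it; ``the fixed-point data constrains the basic classes'' is not an argument, and this step cannot be treated as routine. Second, the reduction from ``symplectic, $b^+=1$, circle action with fixed points'' to the explicit list of models needs care: the classification of symplectic $4$-manifolds with $b^+=1$ alone does not give rational or ruled (Dolgachev surfaces and exotic rational surfaces intervene), and it is Fintushel's classification --- which shows, e.g., that a simply connected $4$-manifold with a circle action is a standard connected sum of $S^4$, $\pm\mathbb{CP}^2$ and $S^2\times S^2$ --- that excludes the exotic smooth structures. You gesture at this correctly. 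On the other hand, your concern about performing blow-downs \emph{equivariantly} is unnecessary: the theorem only asks for the existence of \emph{some} symplectic circle action on $M$, not one isotopic or otherwise related to the given smooth action, so once $M$ is identified diffeomorphically as a rational or ruled surface you are free to equip it with any symplectic circle action you like.
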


It seems unlikely that this continue to be true in higher
dimensions. On the other side, symplecticness of the action does
imply some topological restrictions on a manifold. For example, one
can repeat the argument above to show that if a circle action is
free and compatible with a symplectic form, then  $X/S^1$ also
fibers over a circle.  Thus, one can ask the following question:
{\it given a closed symplectic manifold $M$ with a smooth circle
action, when does $M$ admit a symplectic circle action?}

We wish to gain some understanding of these compatibility problems
by looking first at constructions of Lie group actions which are
non-compatible with any  symplectic form. Note that actions which
are non-compatible with a given symplectic form obviously exist.
Simply deform an invariant symplectic form by a non-equivariant
diffeomorphism or deform the action by a non-symplectic
diffeomorphism.   In \cite{A}, Allday constructed examples of
cohomologically symplectic manifolds with circle actions which
cannot be symplectic for cohomological reasons. However, there is no
method in sight to see if manifolds in his examples are symplectic,
since the surgery used in the construction is apparently
non-symplectic. Hence, it is still interesting to look for
constructions of {\it symplectic} manifolds with exotic, from the
point of view of symplectic topology, smooth circle actions.

Yet there are other reasons to study examples along the borderline
between symplectic and topological properties of group actions.
In fact, if a symplectic manifold does admit a symplectic circle action,
there are various restrictions on topology of $M$, e.g. expressed in
cohomological terms (cf. \cite{A}, \cite{Oz}), on characteristic classes
(cf. \cite{Fe}), or on Novikov cohomology (cf. \cite{Fa}).
However, as in \cite{A}, some of these results can be proved by purely cohomological means.

A specific example of this phenomenon are manifolds which are
asymmetric, i.e. do not admit any action of a compact Lie group.
It was discovered some 40 years ago that closed asymmetric manifolds
exist, and there are plenty of them, see Volker Puppe's survey
\cite{Pu}. Symplectic  manifolds with no symplectic action of any
compact connected Lie group are also known. Namely, Polterovich \cite{Po}
has shown that for any closed symplectic manifold
$(M,\omega)$ with $\pi_2(M)=0$ and centerless $\pi_1(M),$ the
identity component $\operatorname{Symp}_0(M,\omega)$ of the
symplectomorphism group is torsion free. However, it seems that
these manifolds do not admit circle actions.

In this paper we construct examples of two types. First, there are
smooth circle actions on closed symplectic manifolds with
non-symplectic sets of fixed points, resulting from an action on
a manifold $X$ with isolated fixed points and two $4$-manifolds
$M$ and $N$ such that $M\times X$ is diffeomorphic to $N\times X$,
while only one of $M$ and $N$ is symplectic.

The other type of examples is obtained from smooth actions of the circle $S^1$ 
on spheres with prescribed fixed point sets and $\mathbb{Z}_{pq}$-isotropy 
point sets for distinct primes $p$ and $q$. By forming the equivariant
connected sum of the sphere and the cartesian product of copies of the $2$-sphere
with some action of $S^1$, we obtain smooth actions of $S^1$ on symplectic manifolds 
with non-symplectic $\mathbb{Z}_{pq}$-isotropy point sets inherited from the spheres.

Once we have a specific smooth action of a cyclic group $\mathbb{Z}_r$ on a symplectic 
manifold $M$ with fixed point set $F$ such that $S^1 \times S^1 \times F$ is not symplectic, 
the mapping torus construction allows us to convert $M$ into symplectic manifold 
with a smooth fixed point free action of $S^1$ such that the $\mathbb{Z}_r$-isotropy 
point set is diffeomorphic to $S^1 \times S^1 \times F$, and therefore 
the constructed action of $S^1$ is not symplectic (see Proposition~\ref{pro:torus}).

\section{Fixed point sets of symplectic actions}

The following is well-known, cf. \cite{GuSt}, Lemma 27.1.

\begin{lemma}\label{sympfix}
Let $G$ be a compact Lie group. If $G$ acts symplectically on a
symplectic manifold $(M,\omega)$, then the fixed point set $M^G$
is a symplectic submanifold.
\end{lemma}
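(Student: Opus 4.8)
The plan is to establish two things: that $M^G$ is a smooth submanifold of $M$, and that $\omega$ restricts to a nondegenerate form on it (closedness of the restriction being automatic, as it is the pullback of a closed form). Both statements are local at a fixed point $p\in M^G$, and the compactness of $G$ enters only through the averaging trick that produces $G$-invariant auxiliary tensors.

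First I would fix a $G$-invariant Riemannian metric $g_0$ on $M$ by averaging an arbitrary metric over $G$ with respect to Haar measure. Then the exponential map of $g_0$ at $p$ is $G$-equivariant, intertwining the linear isotropy representation of $G$ on $T_pM$ with the $G$-action on a neighbourhood of $p$; hence in geodesic normal coordinates $M^G$ coincides near $p$ with the linear subspace $(T_pM)^G$. This shows that $M^G$ is a smooth (possibly non-equidimensional) submanifold with $T_p(M^G)=(T_pM)^G$ for every $p$.

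For nondegeneracy I would feed the pair $(\omega,g_0)$ into the standard construction of a compatible almost complex structure, obtaining $J$ with $g:=\omega(\cdot,J\cdot)$ a Riemannian metric; by naturality of that construction $J$ is again $G$-invariant, so $J_p$ commutes with the isotropy action and therefore preserves $V:=(T_pM)^G$. Now if $v\in V$ satisfies $\omega_p(v,w)=0$ for all $w\in V$, then taking $w=J_pv\in V$ gives $g_p(v,v)=0$, whence $v=0$. Thus $\omega_p$ is nondegenerate on $V=T_p(M^G)$, and since this holds at every point of $M^G$, the restriction $\omega|_{M^G}$ is a nondegenerate closed $2$-form, i.e. a symplectic form, so $M^G$ is a symplectic submanifold.

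The only genuine ingredient is the averaging over the compact group $G$, which makes both the metric and the compatible almost complex structure invariant; everything else is the linear algebra of a $J$-invariant subspace on which the compatible metric is positive definite. The one point I would state carefully is that $M^G$ may be disconnected with components of varying dimension, but since the entire argument is pointwise this causes no difficulty, each component being symplectic on its own.
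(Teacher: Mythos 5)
Your proof is correct, but the key nondegeneracy step goes by a genuinely different route than the paper's. The paper argues directly with $\omega$: for $U\in T_x(M^G)$ and any $V\in T_xM$, invariance gives $\omega(U,V-g_*V)=0$, and since the vectors $V-g_*V$ span a complement of $(T_xM)^G$, a vector $U\in T_x(M^G)$ that is $\omega$-orthogonal to all of $T_x(M^G)$ is $\omega$-orthogonal to all of $T_xM$ and hence zero by nondegeneracy of $\omega$ on the ambient tangent space. You instead route the argument through a $G$-invariant compatible almost complex structure $J$ obtained by polar decomposition from $(\omega,g_0)$, observe that $J_p$ preserves the fixed subspace, and conclude via positivity of $\omega(\cdot,J\cdot)$. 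Both arguments share the same first step (averaging a metric and using the equivariant exponential map to identify $T_p(M^G)$ with $(T_pM)^G$); the paper's version is more elementary in that it needs only the invariance and ambient nondegeneracy of $\omega$, while yours invokes the compatible-$J$ machinery but in exchange exhibits $T_p(M^G)$ as a $J$-complex subspace, which makes the nondegeneracy transparent and essentially subsumes the paper's subsequent lemma on almost complex actions. Your remark that $M^G$ may be disconnected with components of varying dimension, each symplectic on its own, is a point the paper leaves implicit.
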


\begin{proof}
Let $x\in M^G.$ Then, with respect to a chosen invariant Riemannian metric,
$G$ acts on a normal slice via a faithful orthogonal representation.
Thus, a vector $U$ of $T_x(M)$ belongs to $T_x(M^G)$ if and only if
$g_*U=U$ for every $g\in G$. Moreover, vectors of the form $V-g_*V$
span a subspace of $T_x(M)$ transversal to $M^G.$ Hence, for $U\in T_x(M^G)$,
we have $\omega (U,V)=\omega (g_*U,g_*V)=\omega (U,g_*V)$, and
therefore $\omega (U,V-g_*V)=0$ for any $g\in G$ and $V\in T_x(M)$.
So, if $\omega (U,W)=0$ for all $W\in T_x(M^G)$, then also $\omega
(U,W')=0$ for all $W'\in T_x(M)$ and this implies that $U = 0$.
Thus $\omega |T(M^G)$ is symplectic.
\end{proof}

\begin{corollary}
Let $G$ be a compact Lie group and let $H$ be a closed subgroup of
$G$. If $G$ acts symplectically on a symplectic manifold $M$, then
the set of points with isotropy equal to $H$ is a symplectic
manifold.
\end{corollary}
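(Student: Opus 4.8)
The plan is to deduce this from Lemma~\ref{sympfix} applied to $H$ itself, together with the fact that the set of points with isotropy exactly $H$ is an \emph{open} subset of the fixed point set $M^H$, and that an open subset of a symplectic manifold is again symplectic.

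\textbf{Step 1.} Since $H$ is a closed subgroup of the compact Lie group $G$, it is itself a compact Lie group, and it acts symplectically on $(M,\omega)$ by restriction of the $G$-action. Hence Lemma~\ref{sympfix} applies with $G$ replaced by $H$, so $M^H$ is a symplectic submanifold of $M$, with symplectic form the restriction $\omega|_{T(M^H)}$.

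\textbf{Step 2.} Write $M^{(H)}=\{\,x\in M : G_x=H\,\}$ for the set of points with isotropy group exactly $H$; plainly $M^{(H)}\subseteq M^H$. I claim $M^{(H)}$ is open in $M^H$, and I would prove this with the slice theorem. Fix $x\in M^{(H)}$, so $G_x=H$, and let $V$ be the slice $H$-representation at $x$, so that some $G$-invariant neighbourhood of the orbit $Gx$ is $G$-equivariantly diffeomorphic to $G\times_H V$, with $x$ corresponding to $[e,0]$ and a general point $[g,v]$ having isotropy group $gH_vg^{-1}$, where $H_v\subseteq H$ is the stabilizer of $v$ in $H$. Now $[g,v]$ lies in $M^H$ exactly when $H\subseteq gH_vg^{-1}$, that is, $g^{-1}Hg\subseteq H_v\subseteq H$. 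Since $H$ is compact, the inclusion $g^{-1}Hg\subseteq H$ forces $g^{-1}Hg=H$ (a compact Lie group contains no proper conjugate of itself: compare dimensions of identity components and orders of the finite component groups). Consequently $g\in N_G(H)$ and $H_v=H$, so $v\in V^H$, and then $[g,v]$ has isotropy $gHg^{-1}=H$. Thus every point of $M^H$ sufficiently close to $x$ again lies in $M^{(H)}$, proving the claim. (If $M^{(H)}=\varnothing$ there is nothing to prove, the empty manifold being vacuously symplectic.)

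\textbf{Step 3.} By Steps~1 and~2, $M^{(H)}$ is an open subset of the symplectic manifold $(M^H,\omega|_{T(M^H)})$, hence is itself a symplectic manifold. The one genuinely delicate point is the openness in Step~2, and within it the elementary group-theoretic fact that $g^{-1}Hg\subseteq H$ implies $g^{-1}Hg=H$ for compact $H$; everything else is a direct application of the slice theorem and of Lemma~\ref{sympfix}.
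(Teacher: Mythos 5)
Your proof is correct: you apply Lemma~\ref{sympfix} to the restricted action of the compact group $H$, and then use the slice theorem together with the fact that $g^{-1}Hg\subseteq H$ forces equality for compact $H$ to show that $\{x : G_x = H\}$ is open in $M^H$, hence symplectic. The paper states this corollary without any proof, treating it as immediate from the lemma; your argument supplies exactly the standard justification one would expect, with the only genuinely non-obvious point (openness in $M^H$) handled properly.
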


An analogous property  for almost complex manifolds and actions is
straightforward.

\begin{lemma} If a compact Lie group $G$ acts smoothly on an almost complex
manifold $M$ preserving an almost complex structure $J$, then the fixed point
set $M^G$ is a J-holomorphic submanifold of $M.$
\end{lemma}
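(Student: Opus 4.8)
The plan is to mimic the argument for Lemma~\ref{sympfix}, replacing the nondegeneracy step by a direct check that the tangent space to the fixed point set is a complex subspace. First I would recall the standard fact that the fixed point set $M^G$ of a smooth action of a compact Lie group $G$ is a closed submanifold of $M$: fixing a $G$-invariant Riemannian metric, the exponential map at a fixed point $x$ intertwines the linear isotropy action of $G$ on $T_xM$ with the action on a normal slice, so near $x$ the set $M^G$ is identified with the fixed subspace $(T_xM)^G$ of the linear representation. In particular $T_x(M^G) = (T_xM)^G$, the subspace of vectors $U \in T_xM$ with $g_*U = U$ for all $g \in G$.

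Next I would use that $G$ preserves $J$, which means $dg\circ J = J\circ dg$ as bundle endomorphisms of $TM$; at the fixed point $x$ this is precisely the pointwise identity $g_* J_x = J_x g_*$ on $T_xM$ for every $g\in G$. Hence if $U \in T_x(M^G)$ then $g_*(J_x U) = J_x(g_*U) = J_x U$ for all $g$, so $J_x U \in T_x(M^G)$ as well. Thus $T_x(M^G)$ is a $J_x$-invariant subspace of $T_xM$, i.e. a complex subspace, and since this holds at every $x \in M^G$, the restriction $J|T(M^G)$ is a well-defined almost complex structure on $M^G$; this is exactly the assertion that $M^G$ is a $J$-holomorphic submanifold.

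Since the linearization of the action near a fixed point is the only real input, there is essentially no obstacle here --- this is why, as the text remarks, the statement is straightforward. The one point to be slightly careful with is the interpretation of ``$G$ preserves $J$'', so that it genuinely yields the pointwise commutation $g_*J_x = J_x g_*$ used above; note also that, in contrast to the symplectic case, one does not need the orthogonality of the slice representation, only its linearity together with the fact that it commutes with $J_x$.
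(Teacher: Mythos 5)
Your argument is correct and is essentially the paper's own proof: both establish $T_x(M^G)=(T_xM)^G$ via the linearization at a fixed point and then use $G$-invariance of $J$ in the form $g_*Jg_*^{-1}=J$ to conclude $g_*(JU)=Jg_*U=JU$, so that $T_x(M^G)$ is $J$-invariant. Your write-up merely makes explicit the slice identification and the pointwise meaning of ``$G$ preserves $J$'', which the paper leaves implicit.
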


\begin{proof}
If $J$ is $G$-invariant, $g_*(JU)=g_*Jg_*^{-1}g_*U=JU$
for any $U\in T_xM^G$ and $g\in G$.
\end{proof}

\section{Circle actions with non-symplectic fixed point sets}\label{circle}

For closed simply connected $6$-manifolds $M$ whose second Stiefel--Whitney class $w_2(M)$ vanishes,
the diffeomorphism type of $M$ is completely determined by the cohomology ring $M^{*}(M,\mathbb{Z})$
and the first Pontriagin class $p_1(M)$.
More precisely, Theorem 3 in the book of Wall \cite{W} can be stated as follows.

\begin{theorem}\label{thm:wall}
The diffeomorphism classes of closed simply connected $6$-manifolds $M$
with torsion free $H^{*}(M,\mathbb{Z})$ and $w_2(M) = 0$ correspond bijectively
to the isomorphism classes of an algebraic invariant consisting of:
\begin{enumerate}
\item two free abelian groups $H=H^2(M;\mathbb{Z})$ and $G=H^3(M;\mathbb{Z})$,
\item a symmetric trilinear map
$\mu: H\times H\times H\rightarrow \mathbb{Z}$ given by the cup product,
\item a homomorphism $p_1: H\rightarrow\mathbb{Z}$ determined by the
first Pontriagin class $p_1$.
\end{enumerate}
\end{theorem}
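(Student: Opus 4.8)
The assignment to $M$ of the pair of free abelian groups $H = H^2(M;\mathbb{Z})$, $G = H^3(M;\mathbb{Z})$, of the cup-product form $\mu$, and of the homomorphism $p_1$ is plainly a diffeomorphism invariant, so what has to be shown is that this datum is complete and that every datum arising in this way is realised. The plan is to reproduce Wall's argument \cite{W}. Write $r$ for the rank of $H$ and $s$ for the rank of $G$.

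First I would put $M$ into a normal form using handle theory. Simple connectivity lets one cancel the handles of index $1$ and, dually, of index $5$; since $H^*(M;\mathbb{Z})$ is torsion free, Poincar\'e duality identifies $H_4(M)$ with $H$ and $H_5(M)$ with $0$, so $M$ admits a handle decomposition consisting of one $0$-handle, $r$ handles of index $2$, $s$ handles of index $3$, $r$ handles of index $4$ (Poincar\'e dual to the $2$-handles), and one $6$-handle. The hypothesis $w_2(M) = 0$, that is, $M$ is spin, forces the normal bundles of the core $2$-spheres (and, dually, of the core $4$-spheres) to be trivial, so the $2$-handles may be taken to be attached along the standard framed unlink of circles in $S^5$, whence the $2$-handlebody is a boundary connected sum of $r$ copies of $S^2 \times D^4$, with boundary a connected sum of $r$ copies of $S^2 \times S^3$. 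The essential geometry of $M$ is therefore carried by the attaching maps of the $3$-handles, which are framed embeddings of $S^2 \times D^3$ into that connected sum; their isotopy classes are pinned down by the homology classes they represent together with framing invariants, and one checks that the homological part is exactly the cup-product form $\mu$, which measures how the belt spheres of the $3$-handles link the core spheres of the $2$-handles, while the residual framing part is exactly $p_1$.

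Given this, two manifolds $M$ and $M'$ with isomorphic algebraic data admit handle decompositions agreeing up to attachment of the top cell; from this I would build a compact $7$-manifold $W$ with $\partial W = M \sqcup M'$, restricting on the two ends to the given data, and verify that $W$ is an $h$-cobordism. Here, besides simple connectivity, one uses that $\Theta_6 = 0$, so that no exotic $6$-sphere can obstruct the construction; since $\dim W = 7 \geq 6$, the $h$-cobordism theorem then yields a diffeomorphism $M \cong M'$. Conversely, an arbitrary admissible datum is realised by running the same construction directly: attach $3$-handles to the boundary connected sum of copies of $S^2 \times D^4$ according to the prescribed $\mu$ and $p_1$, then cap off with a $6$-handle. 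The main obstacle is the bookkeeping in the middle step, namely proving that the isotopy class of the whole system of $3$-handle attachments, and hence the diffeomorphism type of $M$, depends only on $\mu$ and $p_1$ and on no finer embedding or framing data. This rests on the computation of the homotopy groups of $SO$ and of $G/O$ in the low degrees that occur, and it is exactly here that the condition $w_2(M) = 0$ is indispensable, since it annihilates the $2$-torsion ambiguities that would otherwise intervene. For these details I refer to \cite{W}.
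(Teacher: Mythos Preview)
The paper does not prove this theorem at all: it is quoted verbatim from Wall~\cite{W} as a classification result to be used as a black box in the proof of Proposition~\ref{four}. So there is no ``paper's own proof'' to compare against; the paper and your proposal ultimately point to the same reference.

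That said, your sketch is a reasonable outline of how such a classification goes, and you are right that $\Theta_6=0$ and the $h$-cobordism theorem are the engines, and that $w_2=0$ kills the $2$-torsion framing ambiguities. One point of caution: Wall's actual argument in~\cite{W} is organised somewhat differently from your handle-by-handle picture. He first splits off copies of $S^3\times S^3$ to reduce to the case $H^3(M)=0$, and then analyses the remaining piece via a more homotopy-theoretic route (maps into Eilenberg--MacLane spaces and obstruction theory) rather than by directly classifying isotopy classes of systems of $3$-handle attaching maps. Your claim that ``the homological part is exactly the cup-product form $\mu$'' and that ``the residual framing part is exactly $p_1$'' is morally correct but is precisely the delicate computation, and it is not literally carried out in the handlebody language you set up. Since both you and the paper defer to~\cite{W} for the details, this is not a gap so much as a reminder that your sketch is a plausible reorganisation of Wall's proof rather than a summary of it.
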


In the sequel, for a space $X$, the product of $n$ copies of $X$ is denoted
by $\prod{\!^n}X$, and the disjoint union of $n$ copies of $X$ is denoted
by $\coprod{\!^n}X$.

\begin{proposition}\label{four}
Let $M$ and $N$ be two closed simply connected $4$-dimensional smooth
manifolds such that the following condition holds.
\begin{enumerate}
\item $M$ and $N$ are homeomorphic, but $M$ and $N$ are not diffeomorphic.
\item $M$ is not a symplectic manifold and $N$ admits a symplectic structure.
\item The second Stiefel--Whitney class $w_2(M)$ vanishes, $w_2(M) = 0$.
\item The cohomology ring $H^{*}(M,\mathbb{Z})$ is torsion free.
\end{enumerate}
Then for any $n \geq 1$, the manifold $M \times \prod{\!^n}S^2$ is symplectic
and admits a smooth action of $S^1$ not compatible with any symplectic form.
\end{proposition}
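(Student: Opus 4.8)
The plan is to prove first that $M \times \prod{\!^n}S^2$ is diffeomorphic to $N \times \prod{\!^n}S^2$ for every $n \geq 1$, and then to deduce both assertions of the proposition from this diffeomorphism together with Lemma~\ref{sympfix}.

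The heart of the matter is the case $n = 1$, which I would settle using Wall's classification (Theorem~\ref{thm:wall}). First I would check that $M \times S^2$ and $N \times S^2$ satisfy its hypotheses: both are closed and simply connected, their integral cohomology is torsion free by the K\"unneth formula (here I use that $H^{*}(N;\mathbb{Z})$ is torsion free as well, since $N$ is homeomorphic to $M$), and $w_2(M \times S^2) = w_2(M) = 0$ by the Whitney product formula, while $w_2(N \times S^2) = w_2(N) = 0$ because the second Stiefel--Whitney class of a closed simply connected $4$-manifold is determined by the mod $2$ intersection form and hence is a homeomorphism invariant. Next I would compute the Wall invariant of $M \times S^2$: writing $\alpha \in H^2(S^2;\mathbb{Z})$ for the generator, one has $H^2(M \times S^2;\mathbb{Z}) = H^2(M;\mathbb{Z}) \oplus \mathbb{Z}\alpha$ and $H^3(M \times S^2;\mathbb{Z}) = 0$; since $\alpha^2 = 0$ and $H^6(M;\mathbb{Z}) = 0$, the symmetric trilinear form $\mu$ vanishes on every triple with zero or more than one entry in $\mathbb{Z}\alpha$, and on the remaining triples it is given by the intersection form of $M$; and the homomorphism $p_1$ kills $H^2(M;\mathbb{Z})$ and sends $\alpha$ to $\langle p_1(M \times S^2) \cup \alpha, [M \times S^2] \rangle = \langle p_1(M), [M] \rangle = 3\sigma(M)$, using $p_1(S^2) = 0$ and the Hirzebruch signature theorem. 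All the data entering the Wall invariant — the group $H^2(M;\mathbb{Z})$, the intersection form of $M$, and $\sigma(M)$ — are invariants of the homeomorphism type of $M$, so the Wall invariants of $M \times S^2$ and $N \times S^2$ are isomorphic, and Theorem~\ref{thm:wall} yields a diffeomorphism $M \times S^2 \cong N \times S^2$. For general $n$ one then writes
\[
M \times \prod{\!^n}S^2 \;=\; (M \times S^2) \times \prod{\!^{n-1}}S^2 \;\cong\; (N \times S^2) \times \prod{\!^{n-1}}S^2 \;=\; N \times \prod{\!^n}S^2 .
\]

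Since $N$ carries a symplectic form and $S^2$ does, the product $N \times \prod{\!^n}S^2$ is symplectic, and hence so is the diffeomorphic manifold $M \times \prod{\!^n}S^2$; this proves the first assertion. For the second, let $S^1$ act on $S^2$ by rotation about an axis, with the two poles as fixed points, and let $S^1$ act on $M \times \prod{\!^n}S^2$ trivially on the factor $M$ and by this rotation simultaneously on each of the $n$ spheres. A point is fixed precisely when every sphere coordinate is a pole, so the fixed point set of this smooth $S^1$-action is $\coprod{\!^{2^n}}M$. If the action were symplectic with respect to some symplectic form on $M \times \prod{\!^n}S^2$, then by Lemma~\ref{sympfix} its fixed point set $\coprod{\!^{2^n}}M$ would be a symplectic submanifold, so in particular $M$ would admit a symplectic form, contradicting the hypothesis that $M$ is not symplectic. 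Hence the action is not compatible with any symplectic form.

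I expect the only genuine work to be the case $n = 1$: verifying the hypotheses of Wall's theorem and, above all, identifying the cup-product trilinear form and the $p_1$-homomorphism on $M \times S^2$ precisely enough to see that they depend only on the homeomorphism type of $M$. The passage to arbitrary $n$ and the non-symplecticness argument are then immediate from the diffeomorphism and Lemma~\ref{sympfix}.
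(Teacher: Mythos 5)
Your proposal is correct and follows essentially the same route as the paper: apply Wall's classification (Theorem~\ref{thm:wall}) to get a diffeomorphism $M \times S^2 \cong N \times S^2$, conclude that $M \times \prod{\!^n}S^2$ is symplectic, and then rule out compatibility with any symplectic form by exhibiting a diagonal $S^1$-action whose fixed point set is a disjoint union of copies of the non-symplectic manifold $M$, contradicting Lemma~\ref{sympfix}. Your verification of the Wall invariants (the trilinear cup-product form and the $p_1$-homomorphism, via $p_1(M)=3\sigma(M)$) is in fact more detailed than the paper's, which simply invokes the topological invariance of $p_1$ for closed $4$-manifolds.
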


\begin{proof}
It follows from Theorem~\ref{thm:wall} that $M \times S^2$ is {\it diffeomorphic}
to $N\times S^2$. Indeed, under our assumptions,
$w_2(M\times S^2)=w_2(N\times S^2)=0$.
Recall that $p_1(M\times S^2)$ is inherited  from $M,$ and $p_1$ is a
topological invariant for closed 4-manifolds. Therefore $p_1(M\times S^2)
= p_1(N\times S^2)$, and $M\times S^2$ is diffeomorphic to $N\times S^2.$
As the manifold $N\times S^2$ is symplectic, so are $M\times S^2$ and
$M\times \prod{\!^n}S^2$ for any $n \geq 1$.

Now, consider the diagonal action of $S^1$ on $M\times \prod{\!^n}S^2$,
where $S^1$ acts trivially on $M$ and $S^1$ acts smoothly on
$\prod{\!^n}S^2$ with a finite number $k$ of fixed points. Then the
fixed point set of the diagonal action of $S^1$ on $M\times \prod{\!^n}S^2$
is diffeomorphic to $\coprod{\!^k} M$. So, by Lemma \ref{sympfix},
the action is not symplectic with respect to any symplectic structure.
\end{proof}

\begin{example}
{\rm
Some examples of manifolds $M$ and $N$ as required above are
obtained by applying to symplectic $4$-manifolds constructions such as logarithmic
transformation or knot surgery. To detect both non-diffeomorphism
and non-symplecticness one uses  Taubes' theorem that for a symplectic manifold $N$,
the Seiberg--Witten invariant $SW_N$ is equal to $\pm 1$ on a class $u\in H^2(M;\mathbb Z)$.
There are examples of symplectic manifolds $N$ such that it is possible to
obtain from $N$ a smooth manifold $M$ of the same topological type
but with $SW_M(u)\neq \pm 1$ for any $u$ (see Sec. 12.4 of \cite{S}, or \cite{P}).
An explicit example is the Barlow surface which is non-symplectic and
homeomorphic to $\mathbb{C}P^2$ blown up in 8 points. There exists also
a non-symplectic manifold homeomorphic to the K3 surface.}
\end{example}

\section{Circle actions with non-symplectic cyclic isotropy point sets}

For any integer $k \geq 0$, we shall consider the representation $t^k
\colon S^1 \to U(1)= S^1$ given by $t^k(z) = z^k$ for all $z \in S^1$,
and we write $t^k + t^{\ell} \colon S^1 \to U(2)$ to denote the direct
sum of $t^k$ and $t^{\ell}$ for $k, \ell \geq 0$. Moreover, $n t^k$
denotes the direct sum $t^k + \dots + t^k$, $n$-times.

For $G = S^1$ and $H = \mathbb{Z}_{pq}$ for two distinct primes $p$ and $q$,
we wish to give examples of smooth actions of $G$ on symplectic manifolds
$M$ such that the $H$-isotropy point set $M_H = \{x \in M \ | \ G_x = H \}$
is not a symplectic manifold, and thus the action of $G$ on $M$ is not symplectic
with respect to any possible symplectic structure on $M$.

Hereafter, $D^n$ denotes the $n$-dimensional disk. First, we construct a smooth action
of $S^1$ on the disk $D^7$ with prescribed properties.

\begin{theorem}\label{thm:action-disk}
Let $G = S^1$ and $H = \mathbb{Z}_{pq}$ for two distinct primes $p$ and $q$.
Then there exists a smooth action of $G$ on the disk $D^7$ such that
the following conclusions hold.
\begin{enumerate}
\item The family of isotropy subgroups in $D^7$ consists of $G$, $H$,
$\mathbb{Z}_p$, $\mathbb{Z}_q$, and $\{1\}$.
\item The manifold $D^7_G$ is diffeomorphic to $D^1$.
\item The manifold $D^7_H$ is $G$-diffeomorphic to
      $\coprod{\!^k}G/H \cong \coprod{\!^k}S^1$ for any $k \geq 1$.
\item For any $x \in D^7_G$ {\rm (resp. $D^7_H$)}, the representation of
      $G$ {\rm (resp. $H$)} on the normal space to $D^7_G$ {\rm (resp. $D^7_H$)}
      in $D^7$ at $x$ is isomorphic to $V$ {\rm (resp. ${\rm Res}^G_H(V)$)},
      where $V = \mathbb{R}^6$ with the action of $G$ given by the representation
      $t^1 + t^p + t^q$.
\end{enumerate}
\end{theorem}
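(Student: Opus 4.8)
The plan is to build the action of $G=S^1$ on $D^7$ explicitly as a linear representation, and then to control the fixed and isotropy data by choosing the weights carefully. The natural candidate is to take $D^7$ as the unit disk in $\mathbb{R}^7 = \mathbb{R}\oplus V$, where $\mathbb{R}$ carries the trivial $G$-action and $V=\mathbb{R}^6$ carries the representation $t^1+t^p+t^q$ (here $t^1,t^p,t^q$ are the weight-$1$, weight-$p$, weight-$q$ complex characters). The orthogonal $G$-action on $\mathbb{R}^7$ restricts to a smooth action on $D^7$. Conclusions (2) and (4) are then essentially built into the construction: $D^7_G$ is the unit disk in the trivial summand $\mathbb{R}\oplus 0$, hence diffeomorphic to $D^1$, and for $x\in D^7_G$ the normal space is exactly $V$ with the action $t^1+t^p+t^q$; the analogous statement for $H=\mathbb{Z}_{pq}$ follows by restriction, once we know where $D^7_H$ sits.

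**Identifying the isotropy structure.**

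Next I would compute the isotropy subgroups of the linear $G$-action on $V=\mathbb{C}_{(1)}\oplus\mathbb{C}_{(p)}\oplus\mathbb{C}_{(q)}$. A point $(z_1,z_p,z_q)\in V$ has isotropy the intersection of the kernels of $t^1,t^p,t^q$ over those coordinates that are nonzero. Since $p,q$ are distinct primes, the kernel of $t^p$ is $\mathbb{Z}_p$, of $t^q$ is $\mathbb{Z}_q$, and of $t^1$ is $\{1\}$; intersections give $\mathbb{Z}_p\cap\mathbb{Z}_q=\{1\}$ and $\gcd(p,q)=1$. The only way to get isotropy $H=\mathbb{Z}_{pq}$ from this representation is if we enlarge the weights: indeed $\mathbb{Z}_{pq}$ does not occur as a stabilizer of $t^1+t^p+t^q$. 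So the honest construction must be a \emph{nonlinear} assembly: take the linear disk model near $D^7_G$, and separately prescribe a linear model $\coprod{\!^k}\,G/H\times D^6$ near $D^7_H$, where on each $D^6$-slice $H$ acts by ${\rm Res}^G_H(V)$, and then glue these pieces together across the region where only $\mathbb{Z}_p,\mathbb{Z}_q,\{1\}$ occur. The gluing region is a smooth free-ish $G$-cobordism-type collar, and one invokes equivariant isotopy extension / the differentiable slice theorem to patch the normal-bundle models into a single smooth $G$-manifold diffeomorphic to $D^7$. The count $k$ is free because one can take $k$ parallel copies of the $G/H$-orbit in the interior.

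**The main obstacle.**

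The hard part will be verifying that the patched-together object is actually \emph{diffeomorphic to the disk} $D^7$ (not merely a contractible $7$-manifold with the right boundary), and that the $G$-action extends smoothly over the whole of $D^7$ with \emph{exactly} the prescribed isotropy family and no extra strata. Concretely, one needs: (i) an equivariant handle/collar decomposition showing the total space is $D^7$, for which one can start from the $G$-disk neighborhood of $D^7_G$ (which is a disk) and attach equivariant handles corresponding to the $H$-, $\mathbb{Z}_p$-, $\mathbb{Z}_q$-strata without changing the diffeomorphism type — using that all the relevant representation spheres and their quotients are standard; and (ii) a check that the boundary $S^6$ inherits an action with the same isotropy pattern, so that later the equivariant connected sum with $\prod{\!^n}S^2$ is well-defined. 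I would organize this as: first fix the two local linear models and their weights, then describe the gluing template as an equivariant cobordism from the boundary of the $D^7_G$-tube to the boundary of the $D^7_H$-tubes, then appeal to the equivariant $h$-cobordism/handle-cancellation philosophy (all groups here are cyclic and act with standard representations, so no exotic phenomena arise) to conclude the assembled manifold is $D^7$, and finally read off conclusions (1)–(4) from the construction.
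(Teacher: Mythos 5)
Your opening observation is correct and important: the linear representation $t^1+t^p+t^q$ (even after adding a trivial summand) never realizes $\mathbb{Z}_{pq}$ as an isotropy subgroup, so the action cannot be linear, and one must assemble it from two local linear models --- a tube $D^1\times D(V)$ around the fixed arc and tubes $G\times_H D({\rm Res}^G_H V)$ around the $G/H$-orbits --- joined through strata with isotropy $\mathbb{Z}_p$, $\mathbb{Z}_q$, $\{1\}$. This is indeed the architecture of the paper's proof. The gap is that you never construct the joining region; you only assert that a ``smooth free-ish $G$-cobordism-type collar'' exists and can be patched in by isotopy extension. That connecting piece is the actual content of the theorem: one must exhibit a \emph{contractible} $G$-space containing both the arc $D^1$ (with isotropy $G$) and $k$ copies of $G/H$, in which the only other isotropy groups are $\mathbb{Z}_p$, $\mathbb{Z}_q$ and $\{1\}$. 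The paper does this by an explicit quotient construction: take $D^3=D^2\times D^1$ with $G$ acting by $t^1$ on $D^2$, and collapse the boundary $2$-sphere by quotienting one hemisphere by $\mathbb{Z}_p$, the other by $\mathbb{Z}_q$, and the equatorial free orbit by $H=\mathbb{Z}_{pq}$. The result $Y$ is a contractible $G$-CW complex with $Y^G=D^1$ and $Y_H=G/H$, built from cells $G/G\times D^1$, $G/H$, $G/{\mathbb{Z}_p}\times D^1$, $G/{\mathbb{Z}_q}\times D^1$ and $G\times D^2$; taking the union of $k$ copies of $Y$ along $Y^G$ gives a one-dimensional ``skeleton'' $X$, which is then thickened to a smooth $7$-manifold using the product bundle $X\times(\mathbb{R}\oplus V)$ and the equivariant thickening procedure of \cite{Pa}. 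Without some such explicit construction you cannot verify conclusion (1) (that exactly $\mathbb{Z}_p$, $\mathbb{Z}_q$ and $\{1\}$ occur between the two kinds of strata), nor the contractibility needed later.

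A second, smaller issue: your final step appeals to ``equivariant $h$-cobordism / handle-cancellation philosophy'' with the remark that no exotic phenomena arise. This is both unjustified and unnecessary. Once the assembled manifold $D$ is known to be a compact contractible smooth $7$-manifold with simply connected boundary (which follows from the explicit cell structure above), the ordinary, non-equivariant $h$-Cobordism Theorem already identifies $D$ with $D^7$, and the $G$-action is the one carried along by the construction; no equivariant handle cancellation is required.
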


\begin{proof}
As in the proof of \cite[Lemma~1]{Pa}, consider the action of $G$ on
the disk $D^3$ by identifying $D^3$ with $D^2 \times D^1$, where $G$
acts via the representation $t^1$ on
$D^2$ and trivially on $D^1$. Now, form the quotient space
$Y = D^3/{\sim}$ by the following identifications
on the boundary $S^2 = S^2_{+} \cup S^2_{-}$ of $D^3$:
$$S^2_{+}/{\mathbb{Z}_p}, \ \  S^2_{-}/{\mathbb{Z}_q}, \ \ \hbox{and} \ \ (S^2_{+} \cap S^2_{-})/H.$$
Then $Y$ is a finite contractible $G$-CW complex built up from two
disjoint cells $G/G \times D^1$ and $G/H$, by attaching three cells
$G/{\mathbb{Z}_p} \times D^1$, $G/{\mathbb{Z}_q} \times D^1$, and $G \times D^2$.
In particular,
$$Y^G = D^1 \ \ \hbox{and} \ \ Y_H = G/H \cong S^1.$$

For an integer $k \geq 1$, set $X = Y \cup \dots \cup Y$, the union of $k$-copies
of $Y$ along $Y^G$. Then $X$ is a finite contractible $G$-CW complex such that
$X^H = X^G \sqcup X_H$ with
$$X^G = D^1 \ \ \hbox{and} \ \ X_H = \coprod{\!^k} G/H \cong \coprod{\!^k} S^1.$$
Clearly, the family of isotropy subgroups in $X \setminus X^H$ consists of
$\mathbb{Z}_p$, $\mathbb{Z}_q$, and $\{1\}$.

Let $V = \mathbb{R}^6$ with the action of $G$ given by the representation
$t^1 + t^p + t^q \colon G \to U(3)$. Then the product $G$-vector bundle
$X \times (\mathbb{R} \oplus V)$ over $X$, where $G$ acts trivially on $\mathbb{R}$,
when restricted over $X^G$ and $X_H$, splits as follows:
$$X^G \times (\mathbb{R} \oplus V) \cong T(X^G) \oplus (X^G \times V),$$
$$X_H \times (\mathbb{R} \oplus V) \cong T(X_H) \oplus (X_H \times V).$$
The total space $M = X^H \times D(V)$ of the disk bundle $X^H \times D(V)$ over $X^H$
is a compact smooth $7$-manifold upon which $G$ acts smoothly in such 
a way that 
$$M^G = X^G, \ M_H = X_H, \ \hbox{and} \ M^H = M^G \sqcup M_H = X^H,$$
and the family of isotropy subgroups in $M \setminus M^H$ consists of
$\mathbb{Z}_p$, $\mathbb{Z}_q$, and $\{1\}$. Moreover, for any point $x \in M_G$
{\rm (resp. $M_H$)}, the representation of $G$ (resp. $H$) on the normal space
to $M_G$ {\rm (resp. $M_H$)} in $M$ at $x$ is isomorphic to $V$ {\rm (resp. ${\rm Res}^G_H(V)$)}.

To complete the proof, we can apply the equivariant thickening of
\cite[Proposition~1]{Pa}. This procedure allows us to replace 
the $G$-cells in $X \setminus X^H$,
$$G/{\mathbb{Z}_p} \times D^1, \ \ G/{\mathbb{Z}_q} \times D^1, \ \
  \hbox{and} \ \ G \times D^2,$$
by $G$-handles to convert $M$ into a compact contractible smooth $7$-manifold $D$
equipped with a smooth action of $G$ such that $D \supset M$ as a $G$-invariant
submanifold and the family of isotropy subgroups in $D \setminus M$ consists 
of $\mathbb{Z}_p$, $\mathbb{Z}_q$, and $\{1\}$. As $\partial D$ is simply connected
by the construction, $D$ is diffeomorphic to $D^7$ by the h-Cobordism Theorem.
\end{proof}

\begin{corollary}\label{cor:action-sphere}
Let $G = S^1$ and $H = \mathbb{Z}_{pq}$ for two distinct primes $p$ and $q$.
Then for any given integer $n \geq 1$, there exists a smooth action of
$G$ on the sphere $S^{2n+6}$ such that the following four conclusions hold.
\begin{enumerate}
\item The family of isotropy subgroups in $S^{2n+6}$ consists of $G$, $H$,
      $\mathbb{Z}_p$, $\mathbb{Z}_q$, and $\{1\}$.
\item The manifold $(S^{2n+6})^G = S^{2n+6}_G$ is diffeomorphic to $S^{2n}$.
\item The manifold $S^{2n+6}_H$ is $G$-diffeomorphic to $\coprod{\!^k} (G/H \times S^{2n-1})$
      for any $k \geq 1$.
\item For any $x \in S^{2n+6}_G$ {\rm (resp. $S^{2n+6}_H$)}, the representation of
       $G$ {\rm (resp. $H$)} on the normal space to $S^{2n+6}_G$ {\rm (resp. $S^{2n+6}_H$)}
       in $S^{2n+6}$ at $x$ is isomorphic to $V$ {\rm (resp. ${\rm Res}^G_H(V)$)},
       where $V = \mathbb{R}^6$ with the action of $G$ given by the representation
       $t^1 + t^p + t^q$.
\end{enumerate}
\end{corollary}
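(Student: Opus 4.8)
The plan is to derive the action on $S^{2n+6}$ from the action on $D^7$ of Theorem~\ref{thm:action-disk} by crossing with a trivially-acted factor and passing to the boundary. Let $D=D^7$ carry the smooth $G$-action of Theorem~\ref{thm:action-disk} (with the parameter $k\ge 1$ as chosen), let $G$ act trivially on $D^{2n}$, and form the compact contractible smooth $G$-manifold with corners $W=D\times D^{2n}$. Its boundary is
$$\partial W=\big(S^6\times D^{2n}\big)\cup_{\,S^6\times S^{2n-1}}\big(D^7\times S^{2n-1}\big),$$
where $S^6=\partial D^7$; after smoothing the corner along $S^6\times S^{2n-1}$ — which can be done $G$-equivariantly since $G$ preserves the two faces and their common corner locus — this is a smooth $G$-manifold diffeomorphic to the standard sphere $S^{2n+6}$ (being the boundary of $D^7\times D^{2n}\cong D^{2n+7}$, a standard disk). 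We take the action on $S^{2n+6}$ to be this one, with $G$ acting through its action on $S^6=\partial D^7$ on the first piece (trivially on $D^{2n}$) and through its action on $D^7$ on the second piece (trivially on $S^{2n-1}$).

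Conclusions (1), (3), and (4) are then quick. Over $D^7\times S^{2n-1}$ the isotropy groups are exactly those occurring in $D^7$, namely $G,H,\mathbb{Z}_p,\mathbb{Z}_q,\{1\}$, and over $S^6\times D^{2n}$ they form a subfamily of these; since $D^7_H\ne\emptyset$, all five occur, giving (1). From the construction in the proof of Theorem~\ref{thm:action-disk}, $D^7_H=\coprod{\!^k}S^1$ lies in the interior of $D^7$, so $(S^6)_H=\emptyset$ and the whole $H$-isotropy set lies in the second piece: $S^{2n+6}_H=D^7_H\times S^{2n-1}$, which is $G$-diffeomorphic to $\big(\coprod{\!^k}G/H\big)\times S^{2n-1}=\coprod{\!^k}\big(G/H\times S^{2n-1}\big)$, giving (3). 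For (4), forming the product with a trivially-acted factor leaves normal representations unchanged; on the second piece this yields $V$ over $D^7_G\times S^{2n-1}$ and $\Res^G_H(V)$ over $D^7_H\times S^{2n-1}$ by conclusion (4) of Theorem~\ref{thm:action-disk}, and on the first piece, at a point of $(S^6)^G\times D^{2n}$, the normal representation of $(S^6)^G$ in $S^6$ equals that of $D^7_G=D^1$ in $D^7$ at the corresponding endpoint of $D^1$ (because the inward normal of $D^1$ is tangent to $D^1$ there), hence is $V$ again.

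The point that requires care — the main obstacle — is conclusion (2), which amounts to identifying $S^{2n+6}_G=\big((S^6)^G\times D^{2n}\big)\cup\big((D^7)^G\times S^{2n-1}\big)$ with $S^{2n}$. For this I first need that $(D^7)^G=D^1$ is a \emph{properly} embedded arc in $D^7$, meeting $\partial D^7$ transversally in $\partial D^1$. This is visible in the model $M=X^H\times D(V)$ used in the proof of Theorem~\ref{thm:action-disk}: there $M^G=X^G\times\{0\}=D^1$ meets $\partial M$ precisely in $\partial D^1\subset\partial X^H\times D(V)$, and the property survives the equivariant thickening, which modifies $D^7$ only in the region $X\setminus X^H$, away from $X^G$. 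Hence $(S^6)^G=D^1\cap\partial D^7=\partial D^1=S^0$, and $S^{2n+6}_G$ is obtained from the cylinder $D^1\times S^{2n-1}\cong S^{2n-1}\times[-1,1]$ by capping each of its two boundary spheres with a copy of $D^{2n}$ — the standard decomposition of $S^{2n}$. Thus $S^{2n+6}_G\cong S^{2n}$, and since also $S^{2n+6}_G=(S^{2n+6})^G$ trivially, conclusion (2) follows. Assembling (1)–(4) completes the proof.
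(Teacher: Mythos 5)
Your proposal is correct and follows exactly the paper's (very terse) proof: the paper likewise takes $S^{2n+6}=\partial(D^{2n}\times D^7)$ with $G$ trivial on $D^{2n}$ and acting on $D^7$ as in Theorem~\ref{thm:action-disk}, and simply asserts that (1)--(4) hold. You have merely supplied the verification details -- in particular the proper embedding of the fixed arc needed for conclusion (2) -- which the paper omits.
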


\begin{proof}
For a given integer $n \geq 1$, consider $S^{2n+6} = \partial D^{2n+7} \cong 
\partial(D^{2n} \times D^7)$, where $G$ acts trivially on $D^{2n}$, and on $D^7$
as in Theorem~\ref{thm:action-disk}. This yields a smooth action of $G$ on $S^{2n+6}$
such that the conclusions (1)--(4) all hold.
\end{proof}

\begin{remark}
{\rm
In Corollary~\ref{cor:action-sphere}, remove a point from $S^{2n+6}$ fixed under the action
of $G$ to obtain a smooth action of $G$ on $\mathbb{R}^{2n+6}$. As $S^1 \times S^{2n-1}$
is not a symplectic manifold for $n \geq 2$, and $\mathbb{R}^{2n+6}_H$ consists of a number
of copies of $S^1 \times S^{2n-1}$, the action of $G$ on the symplectic manifold
$\mathbb{R}^{2n+6}$ is not symplectic for $n \geq 2$.
}
\end{remark}

\begin{lemma}\label{lem:representation}
Let $G = S^1$ and let $\rho \colon G \to U(n) \subset O(2n)$ be
an orthogonal representation. Then there exists a smooth action of $G$ on
$X^{2n} = S^2 \times \dots \times S^2$, $n$-times, such that for any point $x \in X^{2n}$,
the representation of $G_x$ on $T_x(M)$ is isomorphic ${\rm Res}^G_{G_x}(\rho)$.
\end{lemma}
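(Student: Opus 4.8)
The plan is to reduce to the one-dimensional case and then take a product. As a real orthogonal representation of $S^1$, the complex representation $\rho\colon G\to U(n)\subset O(2n)$ decomposes into $2$-dimensional summands, $\rho\cong t^{k_1}+\dots+t^{k_n}$, and we may assume each $k_i\ge 0$ since $t^{-k}\cong t^{k}$ as real representations of $S^1$ (complex conjugation is an equivariant $\mathbb R$-linear isomorphism — this small fact will be the crux below). I would then take the desired action on $X^{2n}=\prod{\!^n}S^2$ to be the product of $n$ actions, one on each factor: identify the $i$-th copy of $S^2$ with $\mathbb{C}P^1=\mathbb{C}\cup\{\infty\}$ and let $G=S^1$ act by $z\cdot w=z^{k_i}w$, the rotation of speed $k_i$. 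Each such action is smooth, its fixed point set is $\{0,\infty\}$, and on $\mathbb{C}P^1\setminus\{0,\infty\}$ the isotropy group is the finite group $\mathbb{Z}_{k_i}=\ker(t^{k_i})$ (it is all of $G$ in the degenerate case $k_i=0$).

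Next I would compute the isotropy representations of the product action. For a point $x=(x_1,\dots,x_n)\in X^{2n}$ one has $G_x=\bigcap_i G_{x_i}$ and $T_x(X^{2n})=\bigoplus_i T_{x_i}(S^2)$, so it is enough to identify, for each $i$, the $G_x$-representation $T_{x_i}(S^2)$ with $\mathrm{Res}^G_{G_x}(t^{k_i})$, and then sum. If $x_i\in\{0,\infty\}$, the isotropy representation of $G$ on $T_{x_i}(S^2)$ is $t^{k_i}$ (at $0$) or $t^{-k_i}$ (at $\infty$), and since $t^{k_i}\cong t^{-k_i}$ as real $G$-representations we get $T_{x_i}(S^2)\cong t^{k_i}$ in either case, hence $T_{x_i}(S^2)\cong\mathrm{Res}^G_{G_x}(t^{k_i})$ after restricting to $G_x$. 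If instead $x_i\notin\{0,\infty\}$ (so $k_i\ge 1$), then $G_{x_i}=\mathbb{Z}_{k_i}$ acts trivially on $\mathbb{C}$ and hence on $T_{x_i}(S^2)$, so $G_x\subseteq\mathbb{Z}_{k_i}$ acts trivially there; on the other hand $\mathrm{Res}^G_{G_x}(t^{k_i})$ is trivial as well because $G_x\subseteq\mathbb{Z}_{k_i}=\ker(t^{k_i})$. In all cases $T_{x_i}(S^2)\cong\mathrm{Res}^G_{G_x}(t^{k_i})$ as $G_x$-representations, and summing over $i$ yields $T_x(X^{2n})\cong\mathrm{Res}^G_{G_x}\bigl(t^{k_1}+\dots+t^{k_n}\bigr)=\mathrm{Res}^G_{G_x}(\rho)$, which is the assertion.

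I do not expect a genuine obstacle here: the construction is elementary and the passage from one factor to the product is purely formal. The only points that require a little care are the two already flagged — that $t^k\cong t^{-k}$ as real representations, which is exactly what makes the two poles of each $S^2$ contribute the "same" real summand regardless of which pole $x_i$ occupies, and the matching observation that at a non-polar $x_i$ the finite isotropy $\mathbb{Z}_{k_i}$ acts trivially on the tangent space, precisely as $\mathrm{Res}^G_{\mathbb{Z}_{k_i}}(t^{k_i})$ does. Writing out the verification over a mixed point (some $x_i$ polar, some not) is the only bit of bookkeeping that needs to be done with attention.
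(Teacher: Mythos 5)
Your construction is exactly the paper's: decompose $\rho$ into irreducible $2$-dimensional real summands, let $G$ act on each factor $S^2$ as the one-point compactification of the corresponding rotation representation, and take the product (diagonal) action. The paper states the verification of the isotropy representations in one line, whereas you carry it out explicitly (including the $t^{k}\cong t^{-k}$ point at $\infty$ and the trivial action of $\mathbb{Z}_{k_i}$ off the poles), so your argument is correct and, if anything, more complete.
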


\begin{proof}
To obtain the required conclusion, consider the diagonal action of $G$ on
$$\mathbb{R}^{2n} = \mathbb{R}^2 \oplus \dots \oplus \mathbb{R}^2, \ n \hbox{-times},$$
obtained from the decomposition of $\rho$ into the irreducible complex
summands and taking their realifications. For every $G$-summand $\mathbb{R}^2$,
the one point compactification
$$\mathbb{R}^2 \cup \{ \infty \} \cong S^2$$
has the obvious action of $G$, either trivial if $G$ acts trivially on $\mathbb{R}^2$,
or with exactly two fixed points, otherwise. The diagonal action of $G$ on $X^{2n}$
has the required property.
\end{proof}

We refer to the action of $S^1$ on the symplectic manifold $X^{2n}$
described in the proof of
Lemma~\ref{lem:representation} as to the {\emph{$\rho$-associated action}
of $S^1$ on $X^{2n}$.

Let $M$ and $N$ be two smooth $G$-manifolds. Assume that for some points
$x \in M^G$ and $y \in N^G$, the representations of $G$ on $T_x(M)$ and
$T_y(N)$
are isomorphic. This allows us to choose some closed disks in $M$ and
$N$ centered
at $x$ and $y$ with equivalent linear actions of $G$. By removing their
interiors
and gluing together their boundaries, we can form the connected sum
$M \#_{x,y} N$ which admits a smooth action of $G$ such that
$$(M \#_{x,y} N)^G \cong M^G \#_{x,y} N^G.$$

We wish to obtain nonsymplectic smooth actions of $S^1$ on
\emph{closed} symplectic manifolds. As in Lemma~\ref{lem:representation},
for $n \geq 1$, let $X^{2n}$ denote the product $S^2 \times \dots \times S^2$, $n$-times.

\begin{theorem}\label{thm:isotropy}
Let $G = S^1$ and $H = \mathbb{Z}_{pq}$ for two distinct primes $p$ and $q$.
Then for any integer $n \geq 1$, there exists a smooth action
of $G$ on $X^{2n+6}$ such that $X^{2n+6}_G = X^{2n}$ and
$$X^{2n+6}_H \cong_G \coprod{\!^k} (G/H \times S^{2n-1})
               \cong \coprod{\!^k} (S^1 \times S^{2n-1})$$
for any given integer $k \geq 1$. If $n \geq 2$, the product $S^1 \times S^{2n-1}$
does not admit a symplectic structure, and thus the action of $G$
on the symplectic manifold $X^{2n+6}$ is not symplectic with respect
to any possible symplectic structure on $X^{2n+6}$.
\end{theorem}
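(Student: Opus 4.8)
The plan is to feed the sphere action of Corollary~\ref{cor:action-sphere} into the equivariant connected sum construction, using a product of $2$-spheres to ``symplectify'' the sphere while leaving its $\mathbb{Z}_{pq}$-isotropy set untouched. Let $\Sigma$ denote $S^{2n+6}$ with the smooth $G$-action from Corollary~\ref{cor:action-sphere}; thus $\Sigma^G\cong S^{2n}$, the isotropy subgroups in $\Sigma$ lie among $G,H,\mathbb{Z}_p,\mathbb{Z}_q,\{1\}$, one has $\Sigma_H\cong_G\coprod{\!^k}(G/H\times S^{2n-1})$, and the normal $G$-representation at each point of $\Sigma^G$ is $V=t^1+t^p+t^q$. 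On the other side, put $\rho=n\,t^0+t^1+t^p+t^q\colon G\to U(n+3)$ and let $Y$ be $X^{2n+6}=\prod{\!^{n+3}}S^2$ equipped with the $\rho$-associated action of Lemma~\ref{lem:representation}. Then $Y$ is symplectic, being a product of copies of $S^2$; every isotropy subgroup of the $Y$-action lies in $\{G,\mathbb{Z}_p,\mathbb{Z}_q,\{1\}\}$, so in particular \emph{no} point of $Y$ has isotropy $H$, because $\mathbb{Z}_p\cap\mathbb{Z}_q=\{1\}$ in $S^1$; and at a fixed point $y$ of $Y$ lying over a pole of each of the three nontrivial $S^2$-factors one has $T_y(Y)\cong\mathbb{R}^{2n}\oplus V$ as $G$-modules. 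Choosing $x\in\Sigma^G$, for which $T_x(\Sigma)\cong\mathbb{R}^{2n}\oplus V$ by Corollary~\ref{cor:action-sphere}(4), I would then form the closed smooth $G$-manifold $M:=\Sigma\,\#_{x,y}\,Y$.

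The properties of $M$ are then read off directly. Forgetting the action, $M\cong\Sigma\#Y=S^{2n+6}\#X^{2n+6}\cong X^{2n+6}$ (connected sum with a sphere), so $M$ is diffeomorphic to $X^{2n+6}$ and hence carries a symplectic form; transporting that form and the $G$-action along the diffeomorphism, we obtain a smooth $G$-action on $X^{2n+6}$ itself. Because the surgery defining $M$ is performed inside small $G$-invariant disks about the $G$-fixed points $x$ and $y$ — points that lie in $\Sigma^G$ and $Y^G$, hence away from $\Sigma_H$ and from all points of $Y$ with isotropy $\mathbb{Z}_p$ or $\mathbb{Z}_q$ — the $H$-isotropy set is left unchanged: $M_H=\Sigma_H\cong_G\coprod{\!^k}(G/H\times S^{2n-1})\cong\coprod{\!^k}(S^1\times S^{2n-1})$. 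Similarly $M^G\cong\Sigma^G\,\#_{x,y}\,Y^G$; since $S^{2n}\#X^{2n}\cong X^{2n}$, this is a disjoint union of copies of $X^{2n}$, which is $X^{2n+6}_G$ up to the number of components, and in any case $M^G$ is automatically a symplectic submanifold once $M$ is symplectic, so its precise type is irrelevant to the obstruction.

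Finally, let $n\ge2$. By the Künneth theorem $H^2(S^1\times S^{2n-1};\mathbb{R})=0$, since $2n-1\ge3$; on the other hand a closed symplectic $2n$-manifold $(W,\omega)$ satisfies $0\ne[\omega]^n\in H^{2n}(W;\mathbb{R})$ and therefore $[\omega]\ne0$ in $H^2(W;\mathbb{R})$. Hence $S^1\times S^{2n-1}$, and a fortiori the disjoint union $M_H$, admits no symplectic form. If $M$ carried a $G$-invariant symplectic form, then by the corollary to Lemma~\ref{sympfix} the $H$-isotropy set $M_H$ would be a symplectic submanifold of $M$, which is impossible; so the $G$-action on $M\cong X^{2n+6}$ is not symplectic with respect to any symplectic structure. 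The substantive input here is Corollary~\ref{cor:action-sphere}, which produces a $\mathbb{Z}_{pq}$-isotropy pattern that no product action on a product of $2$-spheres can have; the step that most deserves care is checking that the equivariant connected sum simultaneously changes the underlying manifold to $X^{2n+6}$ (immediate, being connected sum with a sphere) and leaves $M_H$ \emph{literally} unchanged — which holds because the surgery is localized at $G$-fixed points, disjoint from where the $H$-isotropy lives.
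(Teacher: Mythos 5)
Your argument is correct and follows essentially the same route as the paper: equivariantly connect-sum the sphere action of Corollary~\ref{cor:action-sphere} with the $\rho$-associated action on $X^{2n+6}$ at $G$-fixed points with matching tangent representations, observe that $X^{2n+6}_H = \varnothing$ so the $H$-isotropy set is inherited intact from the sphere, and invoke Lemma~\ref{sympfix}. Your extra care about the components of the fixed point set (a disjoint union of copies of $X^{2n}$ rather than a single copy, which the paper's line $Y^{2n+6}_G \cong X^{2n}$ glosses over) and your explicit cohomological proof that $S^1 \times S^{2n-1}$ is non-symplectic for $n \geq 2$ are both refinements of details the paper leaves implicit.
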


\begin{proof}
According to Corollary~\ref{cor:action-sphere}, for any integer $n \geq 1$,
there exists a smooth action of $G$ on $S^{2n+6}$ such that the following
four conclusions hold.
\begin{enumerate}
\item The family of isotropy subgroups in $S^{2n+6}$ consists of $G$, $H$,
      $\mathbb{Z}_p$, $\mathbb{Z}_q$, and $\{1\}$.
\item The manifold $(S^{2n+6})^G = S^{2n+6}_G$ is diffeomorphic to $S^{2n}$.
\item The manifold $S^{2n+6}_H$ is $G$-diffeomorphic to $\coprod{\!^k} (G/H \times S^{2n-1})$
      for any $k \geq 1$.
\item For any $x \in S^{2n+6}_G$ {\rm (resp. $S^{2n+6}_H$)}, the representation of
       $G$ (resp. $H$) on the normal space to $S^{2n+6}_G$ (resp. $S^{2n+6}_H$) in $S^{2n+6}$
       at $x$ is isomorphic to $V$ {\rm (resp. ${\rm Res}^G_H(V)$)},
       where $V = \mathbb{R}^6$ with the action of $G$ given by the representation
       $t^1 + t^p + t^q$.
  \end{enumerate}
Choose a point $y \in S^{2n+6}_G$ and consider the representation $\rho$ of $G$
on $T_y(S^{2n+6})$. Clearly, $\rho$ is isomorphic to the realification of $n t^0 + t^1 + t^p + t^q$.
Consider the $\rho$-associated action of $G$ on $X^{2n+6}$ (cf. Lemma~\ref{lem:representation}).
In particular, for any $x \in X^{2n+6}_G$, the representation of $G$ on $T_x(X^{2n+6})$
is isomorphic to $\rho$. Form the $G$-equivariant connected sum
$$Y^{2n+6} := X^{2n+6} \#_{x,y} S^{2n+6}$$
and note that
$$Y^{2n+6}_G \cong X^{2n+6}_G  \#_{x,y} S^{2n+6}_G \cong X^{2n} \#_{x,y} S^{2n} \cong X^{2n}.$$
As $X^{2n+6}_H = \varnothing$, it follows that
$$Y^{2n+6}_H = S^{2n+6}_H \cong_G \coprod{\!^k} (G/H \times S^{2n-1})
                            \cong \coprod{\!^k} (S^1 \times S^{2n-1}).$$
Now, the conclusion that $Y^{2n+6}$ is diffeomorphic to $X^{2n+6}$ yields
the required action of $G$ on $X^{2n+6}$.
\end{proof}

\begin{corollary}\label{cor:isotropy}
Let $G = S^1$ and $H = \mathbb{Z}_{pq}$ for two distinct primes $p$ and $q$.
Then for any integer $n \geq 1$, there exists a smooth action
of $G$ on the manifold $M^{2n+8} = T^2 \times X^{2n+6}$ such that
$M^{2n+8}_G = \varnothing$ and
$$M^{2n+8}_H \cong (T^2 \times X^{2n}) \sqcup \coprod{\!^k} (T^3 \times S^{2n-1})$$
for any given integer $k \geq 1$. If $n \geq 2$, the product $T^3 \times S^{2n-1}$
does not admit a symplectic structure, and thus the action of $G$
on the symplectic manifold $M^{2n+8}$ is not symplectic with respect
to any possible symplectic structure on $M^{2n+8}$.
\end{corollary}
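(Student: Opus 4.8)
The plan is to put a ``twisted diagonal'' circle action on $T^2\times X^{2n+6}=S^1\times S^1\times X^{2n+6}$, built from the action of Theorem~\ref{thm:isotropy}, so that the required non-symplectic isotropy set appears directly. First I would recall from Theorem~\ref{thm:isotropy} the smooth $S^1$-action on $X^{2n+6}$ with $X^{2n+6}_G=X^{2n}$ and $X^{2n+6}_H\cong_G\coprod{\!^k}(S^1\times S^{2n-1})$; its isotropy subgroups are $S^1$, $H$, $\mathbb{Z}_p$, $\mathbb{Z}_q$, $\{1\}$ (this is visible from the proof of Theorem~\ref{thm:isotropy}, where $X^{2n+6}$ is built as an equivariant connected sum of the sphere of Corollary~\ref{cor:action-sphere} with a product of $2$-spheres). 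Since $S^1$ and $H$ are the only isotropy subgroups containing $H$, the $H$-fixed set is
$$F:=(X^{2n+6})^{H}=X^{2n+6}_G\sqcup X^{2n+6}_H\cong X^{2n}\sqcup\coprod{\!^k}(S^1\times S^{2n-1}).$$

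Next, let $G=S^1$ act on $M^{2n+8}:=S^1\times S^1\times X^{2n+6}$ by
$$\theta\cdot(u,v,x)=\bigl(\theta^{pq}u,\,v,\,\theta\cdot x\bigr),$$
where $\theta\cdot x$ denotes the action of Theorem~\ref{thm:isotropy}. One checks immediately that this is a smooth $G$-action, and $M^{2n+8}$ is literally $T^2\times X^{2n+6}$, hence symplectic (the area form on $T^2$ times the product of area forms on the $S^2$-factors of $X^{2n+6}$). A point $(u,v,x)$ is fixed by $\theta$ exactly when $\theta^{pq}=1$ and $\theta$ fixes $x$, so its isotropy group is $H\cap G_x$, where $G_x$ is the isotropy of $x$ for the action of Theorem~\ref{thm:isotropy}. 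In particular no point has isotropy $S^1$, so $M^{2n+8}_G=\varnothing$, while $(u,v,x)$ has isotropy exactly $H$ iff $H\subseteq G_x$, i.e. iff $x\in F$. Therefore
$$M^{2n+8}_H\cong T^2\times F\cong(T^2\times X^{2n})\sqcup\coprod{\!^k}(T^3\times S^{2n-1}).$$

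Finally I would show that for $n\geq2$ this obstructs symplecticness of the action. When $n\geq2$, the sphere $S^{2n-1}$ has dimension $\geq3$, so the K\"unneth formula gives $H^2(T^3\times S^{2n-1};\mathbb{R})\cong H^2(T^3;\mathbb{R})$ with every class pulled back along the projection onto $T^3$; hence for any closed $2$-form $\omega$ on $T^3\times S^{2n-1}$ the power $[\omega]^{n+1}$ is pulled back from $H^{2n+2}(T^3;\mathbb{R})=0$ and so vanishes, while on a closed symplectic $(2n+2)$-manifold $[\omega]^{n+1}\neq0$. Thus $T^3\times S^{2n-1}$ admits no symplectic structure, so the closed manifold $M^{2n+8}_H$ is not symplectic, and by the Corollary following Lemma~\ref{sympfix} the $G$-action on $M^{2n+8}$ is not symplectic with respect to any symplectic form on $M^{2n+8}$.

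There is no serious obstacle: everything of substance already sits in Theorem~\ref{thm:isotropy}. The two points needing attention are the identification $F=(X^{2n+6})^{H}\cong X^{2n}\sqcup\coprod{\!^k}(S^1\times S^{2n-1})$, which relies on there being no isotropy subgroup strictly between $H$ and $S^1$, and the bookkeeping that the twisted diagonal formula really gives an $S^1$-action with $M^{2n+8}_G=\varnothing$ and $M^{2n+8}_H=T^2\times F$; the spectator factor $v$ serves only to make the ambient manifold even-dimensional (indeed equal to $T^2\times X^{2n+6}$) while adding the second circle to the isotropy set.
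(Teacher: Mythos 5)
Your construction is essentially the paper's: the paper takes the diagonal action on $G/H \times G/H \times X^{2n+6} \cong T^2 \times X^{2n+6}$ (so both circle factors carry isotropy $H$), while you rotate only one circle factor via $\theta \mapsto \theta^{pq}$ and leave the other fixed; either way the isotropy group of $(u,v,x)$ is $H \cap G_x$, so the computations of $M^{2n+8}_G = \varnothing$ and $M^{2n+8}_H \cong T^2 \times (X^{2n+6})^H$ come out identically. Your added verifications --- that $(X^{2n+6})^H = X^{2n+6}_G \sqcup X^{2n+6}_H$ because no isotropy subgroup lies strictly between $H$ and $G$, and the K\"unneth argument showing $T^3 \times S^{2n-1}$ admits no symplectic structure for $n \geq 2$ --- are correct details that the paper leaves implicit.
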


\begin{proof}
In order to obtain the required action of $G$ on $M^{2n+8}$,
consider the action of $G$ on $X^{2n+6}$ as in the conclusion of Theorem~\ref{thm:isotropy},
and then take the diagonal action of $G$ on $G/H \times G/H \times X^{2n+6} \cong T^2 \times X^{2n+6}$,
where $G$ acts on $G/H$ in the obvious way.
\end{proof}

Now, we wish to discuss some procedures which allow us to obtain smooth fixed point free
actions of $S^1$ on symplectic manifolds from smooth actions of $\mathbb{Z}_r$ on manifolds
$M$ with non-empty fixed point set $F$.

\begin{proposition}\label{pro:twisted}
Let $M$ be a smooth manifold such that $S^1 \times M$ is a symplectic manifold.
Assume a cyclic group $\mathbb{Z}_r$ acts smoothly on $M$ with non-empty fixed
point set $F$, and for the generator $g$ of $\mathbb{Z}_r$,
the diffeomorphism
$$g \colon M \to M, \ x \mapsto gx$$
is isotopic to the identity on $M$. Then there exists a smooth fixed point free 
action of $S^1$ on $S^1 \times M$ such that the $\mathbb{Z}_r$-isotropy point set 
is diffeomorphic to $S^1 \times F$. In particular, if $S^1 \times F$ is not 
a symplectic manifold, the action of $G$ on $S^1 \times M$ is not symplectic.
\end{proposition}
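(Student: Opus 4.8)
The plan is to realize the required circle action as the rotation action on a suitable mapping torus. After reparametrizing so that the isotopy is constant near its endpoints, fix a smooth isotopy $h_t\colon M\to M$ with $h_0=\mathrm{id}_M$ and $h_1=g$, and form the mapping torus
$$T_g=(\mathbb{R}\times M)/\mathbb{Z},\qquad n\cdot(s,x)=(s+n,\,g^{n}x).$$
The $\mathbb{Z}$-action here is free and properly discontinuous, so $T_g$ is a smooth fibre bundle over $S^1=\mathbb{R}/\mathbb{Z}$ with fibre $M$ and monodromy $g$. Since $g$ is isotopic to the identity, this bundle is trivial, so $T_g$ is diffeomorphic to $S^1\times M$; concretely, setting $\psi_s=g^{\lfloor s\rfloor}\circ h_{s-\lfloor s\rfloor}$ one gets $\psi_{s+1}=g\circ\psi_s$, and the map $(s,x)\mapsto([s],\psi_s^{-1}x)$ descends to a diffeomorphism $T_g\to S^1\times M$.

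Next I would build the circle action. Translation $(s,x)\mapsto(s+\tau,x)$ on $\mathbb{R}\times M$ commutes with the $\mathbb{Z}$-action, hence descends to a smooth action of $\mathbb{R}$ on $T_g$; and since $g^{r}=\mathrm{id}_M$, translation by $r$ acts as the identity on $T_g$, so this action factors through a smooth action of $S^1:=\mathbb{R}/r\mathbb{Z}$. Transporting this action across the diffeomorphism of the previous step gives a smooth action of $S^1$ on $S^1\times M$, and it remains only to identify the isotropy groups. A point $[s,x]\in T_g$ is fixed by $\tau\in S^1$ exactly when $(s+\tau,x)=n\cdot(s,x)$ for some $n\in\mathbb{Z}$, that is, when $\tau\equiv n\pmod r$ and $g^{n}x=x$. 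Consequently every isotropy group is a subgroup of the order-$r$ subgroup $\mathbb{Z}_r=\mathbb{Z}/r\mathbb{Z}\subset S^1$ — in particular no point is fixed by all of $S^1$, so the action is fixed point free — and an isotropy group is all of $\mathbb{Z}_r$ precisely when $gx=x$, that is, when $x\in F$.

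It follows that the $\mathbb{Z}_r$-isotropy point set of $T_g$ is $\{[s,x]:x\in F\}$, which is exactly the mapping torus of the restriction $g|_F$. But $F=M^{\mathbb{Z}_r}$, so $g|_F=\mathrm{id}_F$, and the mapping torus of the identity on $F$ is $S^1\times F$. Carrying this back through the diffeomorphism $T_g\cong S^1\times M$, the $\mathbb{Z}_r$-isotropy point set of the constructed action on $S^1\times M$ is diffeomorphic to $S^1\times F$. Finally, if $S^1\times F$ admits no symplectic form, then the corollary to Lemma~\ref{sympfix} — which asserts that, for a symplectic action of a compact Lie group $G$, the set of points with isotropy group equal to a fixed closed subgroup $H$ is a symplectic submanifold — forces the action of $G=S^1$ on $S^1\times M$ to be non-symplectic with respect to every symplectic form on $S^1\times M$.

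The only step that goes beyond bookkeeping is the triviality of the mapping-torus bundle $T_g\to S^1$, equivalently the construction of the explicit trivializing diffeomorphism $T_g\cong S^1\times M$ out of the isotopy; this is precisely where the hypothesis that $g$ is isotopic to $\mathrm{id}_M$ enters, and without it one would obtain a circle action only on the (a priori exotic) manifold $T_g$ rather than on the given symplectic manifold $S^1\times M$. Smoothness of the $\mathbb{R}$-action, its factoring through the circle, and the isotropy computation are all routine.
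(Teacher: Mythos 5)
Your proof is correct and follows essentially the same route as the paper: you build the twisted product $S^1\times_{\mathbb{Z}_r}M$ (presented as the mapping torus $(\mathbb{R}\times M)/\mathbb{Z}$), use the isotopy $g\simeq\mathrm{id}_M$ to trivialize it as $S^1\times M$, take the rotation action, and compute the isotropy groups. Your version merely spells out the explicit trivialization and the isotropy bookkeeping that the paper leaves implicit.
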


\begin{proof}
The projection $S^1 \times M \to S^1$ yields a fibration
$S^1 \times_{\mathbb{Z}_r} M \to S^1/\mathbb{Z}_r$
with fiber $M$ and the gluing map $g \colon M \to M$, $x \mapsto gx$,
where $g$ is the generator of $\mathbb{Z}_r$. As $g$ is isotopic to
the identity on $M$, $S^1 \times_{\mathbb{Z}_r} M$ is
diffeomorphic to $S^1/\mathbb{Z}_r \times M \cong S^1 \times M$. The obvious action of $S^1$
on the twisted product $S^1 \times_{\mathbb{Z}_r} M$ is fixed point free. Moreover,
$$(S^1 \times_{\mathbb{Z}_r} M)_{\mathbb{Z}_r} \cong S^1/\mathbb{Z}_r \times M^{\mathbb{Z}_r}
   \cong S^1 \times F,$$
completing the proof.
\end{proof}

We wish to present a variation of Proposition~\ref{pro:twisted}.
Let $g$ be a periodic (with order $r$) diffeomorphism of a smooth manifold $M$.
Consider the mapping torus $T(g) = S^1\times_g M$, i.e., the fiber bundle
over $S^1$ with fiber $M$ and the gluing diffeomorphism $g$.

Since $g^r=\operatorname{id}$, $T(g) = S^1\times_{\mathbb{Z}_r} M$. Now,
$S^1$ acting on itself provides an action on $T(g)$. One can easily see
that this action is fixed point free and the $\mathbb{Z}_r$-isotropy point
set is diffeomorphic to $S^1/\mathbb{Z}_r \times F \cong S^1 \times F$,
where $F = M^{\mathbb{Z}_r}$.

\begin{proposition}\label{pro:torus}
Let $M$ be a closed symplectic manifold upon which $\mathbb{Z}_r$ acts smoothly
with non-empty fixed point set $F$, and for the generator $g$ of $\mathbb{Z}_r$,
the diffeomorphism
$$g \colon M \to M, \ x \mapsto gx,$$
is isotopic to a symplectomorphism of $M$. Then $S^1\times T(g)$ is a symplectic manifold
with a smooth fixed point free action of $S^1$ such that the $\mathbb{Z}_r$-isotropy point
set is diffeomorphic to $S^1 \times S^1 \times F$. In particular, if $S^1 \times S^1 \times F$
is not a symplectic manifold, the action of $G$ on $S^1\times T(g)$ is not symplectic.
\end{proposition}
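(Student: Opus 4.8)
The plan is to combine the mapping torus construction described immediately before the statement with the idea behind Proposition~\ref{pro:twisted}, using the symplectic fibration over $S^1$ as the source of the symplectic form. First I would recall that, by hypothesis, $g\colon M\to M$ is isotopic to a symplectomorphism $\varphi$ of $(M,\omega)$; since $g$ has order $r$, the mapping torus $T(g)$ is a fibre bundle over $S^1$ with fibre $M$, and because $g$ is isotopic to $\varphi$, the bundle $T(g)$ is diffeomorphic to the mapping torus $T(\varphi)$ of the symplectomorphism $\varphi$. The latter is a \emph{symplectic} fibration over $S^1$ in the sense that each fibre carries the symplectic form $\omega$ and the monodromy preserves it. Then, as recalled in the Introduction, if a closed manifold $X$ admits a symplectic fibration over the circle, then $X\times S^1$ admits a symplectic structure; applying this with $X = T(g)\cong T(\varphi)$ shows that $S^1\times T(g)$ is symplectic. (Concretely, one writes a closed $2$-form on $T(\varphi)$ restricting to $\omega$ on fibres, adds $K\,d\theta\wedge d\psi$ for the two circle coordinates $\theta$ on $T(\varphi)\to S^1$ and $\psi$ on the extra $S^1$ factor, and observes nondegeneracy for a suitable orientation/scaling, exactly as in the classical Thurston-type argument.)

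Next I would exhibit the circle action and identify the isotropy set. The circle acts on $T(g) = S^1\times_{\mathbb Z_r}M$ via the action of $S^1$ on its first factor; extend this to $S^1\times T(g)$ by acting trivially on the extra $S^1$ factor, or equivalently act only on the $T(g)$ coordinate. As noted in the paragraph preceding the statement, this action on $T(g)$ is fixed point free and its $\mathbb Z_r$-isotropy point set is diffeomorphic to $S^1/\mathbb Z_r\times F\cong S^1\times F$, where $F = M^{\mathbb Z_r}$. Taking the product with the trivial-action factor $S^1$, the resulting action on $S^1\times T(g)$ is still fixed point free and its $\mathbb Z_r$-isotropy point set is $S^1\times(S^1\times F)\cong S^1\times S^1\times F$, as claimed.

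Finally, the non-symplecticness of the action is a direct application of the corollary to Lemma~\ref{sympfix}: if the $S^1$-action on $S^1\times T(g)$ were symplectic with respect to \emph{some} symplectic form on $S^1\times T(g)$, then the isotropy point set with isotropy exactly $\mathbb Z_r$ would be a symplectic submanifold; but this set is diffeomorphic to $S^1\times S^1\times F$, which by hypothesis admits no symplectic structure at all — contradiction. Hence no symplectic form on $S^1\times T(g)$ is invariant under this action.

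The main obstacle is the first step: carefully producing the symplectic form on $S^1\times T(g)$ from the fact that $g$ is merely \emph{isotopic} to a symplectomorphism rather than equal to one. One must check that the diffeomorphism type of the mapping torus depends only on the isotopy class of the gluing map (standard, but worth stating), and then that the Thurston-type closed $2$-form on the mapping torus of a symplectomorphism — cohomologous to $\omega$ on each fibre and supplemented by the wedge of the two base $1$-forms — is genuinely nondegenerate on the total space; this is the ``compare the proof of Proposition~\ref{pro:torus}'' content alluded to in the Introduction, and everything downstream (the action and the isotropy computation) is essentially formal.
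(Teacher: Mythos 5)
Your proposal is correct and follows essentially the same route as the paper: replace $T(g)$ by the diffeomorphic mapping torus of the symplectomorphism, obtain the symplectic form on the product via the Thurston-type argument for symplectic fibrations over $S^1\times S^1$, act on the $S^1$-factor of the twisted product to get the fixed point free action with $\mathbb{Z}_r$-isotropy set $S^1\times S^1\times F$, and conclude via Lemma~\ref{sympfix}. The only cosmetic difference is that the paper verifies Thurston's cohomological hypothesis (that $[\omega]$ lies in the image of $i^*$) by a Mayer--Vietoris/Wang sequence argument, whereas you sketch the equivalent explicit construction of the closed $2$-form descending from $M\times[0,1]$.
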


\begin{proof}
As noted above, $S^1$ has a smooth fixed point free action on the mapping torus
$T(g)$ such that the $\mathbb{Z}_r$-isotropy point set is diffeommorphic to
$S^1 \times F$. Note that the diagonal action of $S^1$ on $S^1 \times T(g)$,
where $S^1$ acts trivially on the first factor $S^1$, is fixed point free
and the $\mathbb{Z}_r$-isotropy point set is diffeomorphic to $S^1 \times S^1 \times F$.

If $f \colon M \to M$ is a symplectomorphism isotopic to $g$, then $T(f)$
is diffeomorphic to $T(g)$, and $T(f)$ is a symplectic fibration over
a circle, i.e., it possesses a well-defined symplectic structure
{\emph{on fibers}}. This enables us to apply  Thurston's theorem
(see \cite{McDS}, chapter 6) to get a symplectic structure on
$S^1 \times T(f)$, a symplectic fibration over $S^1\times S^1$ with fibre $M$.
It suffices to check the claim that the cohomology class
of the symplectic form on $M$ is in the image of the cohomology homomorphism $i^{*},$
where $i \colon M \to T(f)$ is the inclusion.
The claim follows from the Mayer--Vietoris exact sequence resulting from
a decomposition of the base space of the fibration $T(f)$ into two intervals
(elements in cohomology which are invariant under the gluing map all are
in the image of $i^*)$, or from the Wang exact sequence.
\end{proof}

Propositions~\ref{pro:twisted} and \ref{pro:torus} allows us to obtain
smooth fixed point free actions of $S^1$ on symplectic manifolds with
non-symplectic $\mathbb{Z}_{pq}$-isotropy point sets.

For example, consider the action of $S^1$ on the symplectic manifold
$X^{2n+6}$ constructed in Theorem~\ref{thm:isotropy}, and restrict
the action of $S^1$ to $\mathbb{Z}_{pq}$. Then the diffeomorphism
$$g \colon X^{2n+6} \to X^{2n+6}, \ x \mapsto gx$$
given by the generator $g$ of $\mathbb{Z}_{pq}$ is isotopic to
the identity on $X^{2n+6}$, a symplectomorphism of $X^{2n+6}$.
Therefore, we may apply Proposition~\ref{pro:torus} to obtain
the required action of $S^1$ on the symplectic manifold
$S^1\times T(g) \cong S^1 \times S^1 \times X^{2n+6}$.

Also, we may replace $X^{2n+6}$ by $S^1 \times X^{2n+6}$
with the diagonal action of $\mathbb{Z}_{pq}$, where $\mathbb{Z}_{pq}$
acts trivially on $S^1$, and $\mathbb{Z}_{pq}$ acts on $X^{2n+6}$ by
restricting of the action of $S^1$ on $X^{2n+6}$. Then we may apply
Proposition~\ref{pro:twisted} for $M = S^1 \times X^{2n+6}$,
to obtain the required action of $S^1$ on the symplectic manifold
$S^1 \times M = S^1 \times S^1 \times X^{2n+6}$.

Propositions~\ref{pro:twisted} and \ref{pro:torus} are expected to be
useful also in the case where the manifold $M$ in question admits
a smooth action of $\mathbb{Z}_{pq}$ which does not extend to a smooth
action of $S^1$ on $M$. Examples of such actions of $\mathbb{Z}_{pq}$
on $M$ can be obtained from smooth actions of $\mathbb{Z}_r$ on disks,
whose fixed point sets are well-understood by the work of Oliver \cite{O}.

\medskip

\noindent
{\bf Acknowledgements.} This work was partially supported by the grant 1P03A 03330
of the Ministry of Science and Higher Education, Poland. The third author is
grateful to the Max-Planck-Institute for Mathematics in Bonn for hospitality
and excellent working conditions.

\medskip

\noindent {BH: \bf Mathematical Institute, Wroc\l aw University,

\noindent pl. Grunwaldzki 2/4,

\noindent 50-384 Wroc\l aw, Poland}

\medskip

\noindent

\medskip

\noindent {BH and AT: \bf Department of Mathematics and Information Technology,

\noindent University of Warmia and Mazury,

\noindent \.{Z}o{\l}nierska 14A, 10-561 Olsztyn, Poland}

\medskip

\noindent {KP:
\bf Faculty of Mathematics and Computer Science,

\noindent
Adam Mickiewicz University,

\noindent
Umultowska 87, 61-614 Pozna\'n, Poland}

\medskip

\begin{flushleft}
\tt hajduk@math.uni.wroc.pl

\tt kpa@amu.edu.pl

\tt tralle@matman.uwm.edu.pl
\end{flushleft}

\end{document}